\documentclass[12pt]{article}
\usepackage[latin9]{inputenc}
\usepackage{geometry}
\geometry{verbose}
\pagestyle{plain}
\setcounter{secnumdepth}{5}
\setcounter{tocdepth}{5}
\usepackage{verbatim}
\usepackage{amsmath}
\usepackage{amsthm}
\usepackage{amssymb}
\usepackage[unicode=true,
 bookmarks=false,
 breaklinks=false,pdfborder={0 0 1},backref=section,colorlinks=false]
 {hyperref}

\makeatletter
\usepackage{amsmath, amsthm, amssymb}
\usepackage{hyperref}
\usepackage{verbatim}
\usepackage{pdfsync}
\pagestyle{plain}
\usepackage{fullpage}

\usepackage{sectsty}
\allsectionsfont{\sffamily}

\setcounter{secnumdepth}{5}
\setcounter{tocdepth}{5}

\makeatletter
\newtheorem*{rep@theorem}{\rep@title}
\newcommand{\newreptheorem}[2]{
\newenvironment{rep#1}[1]{
 \def\rep@title{#2 \ref{##1}}
 \begin{rep@theorem}}
 {\end{rep@theorem}}}
\makeatother

\theoremstyle{plain}
\newtheorem{thm}{Theorem}[section]
\newreptheorem{thm}{Theorem}

\newreptheorem{prop}{Proposition}
\newtheorem{lem}[thm]{Lemma}
\newreptheorem{lem}{Lemma}

\newreptheorem{conjecture}{Conjecture}
\newtheorem{cor}[thm]{Corollary}
\newreptheorem{cor}{Corollary}

\newtheorem*{KernelLemma}{Kernel Lemma}
\newtheorem*{MainLemma}{Main Lemma}

\newtheorem*{BrooksTheoremAlpha}{Brooks' Theorem for Independence Number}

\theoremstyle{definition}
\newtheorem{defn}{Definition}
\theoremstyle{remark}

\newcommand{\fancy}[1]{\mathcal{#1}}
\newcommand{\IN}{\mathbb{N}}

\renewcommand{\L}{\fancy{L}}
\newcommand{\HH}{\fancy{H}}

\newcommand{\set}[1]{\left\{ #1 \right\}}

\newcommand{\card}[1]{\left|#1\right|}
\newcommand{\size}[1]{\left\Vert#1\right\Vert}
\newcommand{\ceil}[1]{\left\lceil#1\right\rceil}
\newcommand{\floor}[1]{\left\lfloor#1\right\rfloor}
\newcommand{\func}[3]{#1\colon #2 \rightarrow #3}

\newcommand{\parens}[1]{\left( #1 \right)}

\newcommand{\DefinedAs}{\mathrel{\mathop:}=}

\newcommand{\mic}{\operatorname{mic}}

\newcommand{\col}{\operatorname{col}}

\newcommand{\sm}{\smallsetminus}

\newcommand\restr[2]{{
  \left.\kern-\nulldelimiterspace 
  #1 
  \vphantom{\big|} 
  \right|_{#2} 
  }}

\makeatother

\begin{document}
\title{Extracting list colorings from large independent sets} \author{H.\,A.~Kierstead and Landon Rabern\thanks{School of Mathematical and Statistical Sciences, Arizona State University}} \date{\today}
 \maketitle

\begin{abstract}
We take an application of the Kernel Lemma by Kostochka and Yancey \cite{kostochkayancey2012ore}
to its logical conclusion. The consequence is a sort of magical way to draw conclusions
about list coloring (and online list coloring) just from the existence of an independent
set incident to many edges. We use this to prove an Ore-degree version of Brooks'
Theorem for online list-coloring. The Ore-degree of an edge $xy$ in a graph $G$ is $\theta(xy) = d_G(x) + d_G(y)$.  
The Ore-degree of $G$ is $\theta(G) = \max_{xy\in E(G)}\theta(xy)$.
We show that every graph with $\theta\ge18$ and $\omega\le\frac{\theta}{2}$
is online $\left\lfloor \frac{\theta}{2}\right\rfloor $-choosable.  In addition, we prove an upper bound for online list-coloring
triangle-free graphs: $\chi_{OL}\le\Delta+1-\lfloor\frac{1}{4}\lg(\Delta)\rfloor$.
Finally, we characterize Gallai trees as the connected graphs $G$ with no independent set incident to at least $|G|$ edges.
\end{abstract}

\section{Introduction}
In \cite{kostochkayancey2012ore}, Kostochka and Yancey applied the Kernel Lemma
to a coloring problem in a novel manner. Our Main Lemma generalizes and strengthens
their idea. The basic idea is that given an independent set that is incident to many
edges, we can find a reducible configuration. In this way, we can reduce coloring
problems to the mere existence of a large independent set. Before stating the Main Lemma  we need to introduce some notation, and define the concepts of $f$-choosable and online $f$-choosable graphs.

Let $G = (V, E)$ be a graph.  We write $|G|$ for $|V|$ and $\size{G}$ for $|E|$.  
For $A, B \subseteq V$, put $\size{A,B}_G \DefinedAs \sum_{v \in A} |N_G(v) \cap B|$.  Also, put $\size{A}_G \DefinedAs \size{G[A]}$. Note that $\size{A,B}_G = \size{A\sm B, B\sm A}_G + 2\size{A \cap B}_G$.  
In particular, $\size{A,B}_G = \size{B,A}_G$ and $\size{V, V}_G = 2\size{V}_G$. When the graph $G$ is clear from context we drop the $G$ from the notation and write $\size{A,B}$ and $\size{A}$. For $v \in V$, let $N[v] = N(v) \cup \set{v}$.

Let $G = (V,E)$ be a graph. A list assignment on $G$ is a function $L$ from $V$ to
the subsets of $\IN$. A graph $G$ is \emph{$L$-colorable} if there is $\func{\pi}{V}{\IN}$
such that $\pi(v)\in L(v)$ for each $v\in V$ and $\pi(x)\ne\pi(y)$ for each
$xy\in E$. For $\func{f}{V}{\IN}$, a list assignment $L$ is an \emph{$f$-assignment}
if $\card{L(v)}=f(v)$ for each $v\in V$. We say that $G$ is \emph{$f$-choosable}
if $G$ is $L$-colorable for every $f$-assignment $L$.

Online choosability was independently introduced by Zhu \cite{zhu2009online} and
Schauz \cite{schauz2009mr} (Schauz called it \emph{paintability}). Let $G = (V,E)$ be a
graph and $\func{f}{V}{\IN}$. We say that $G$ is \emph{online $f$-choosable}
if $|G|=0$ or both $f(v)\ge 1$ for all $v\in V$ and for every $S\subseteq V$ there is an
independent set $I\subseteq S$ such that $G-I$ is online $f'$-choosable where
$f'(v)\DefinedAs f(v)$ for $v\in V\sm S$ and $f'(v)\DefinedAs f(v)-1$ for $v\in S\sm I$.  

Observe that if $G$ is online $f$-choosable then it is $f$-choosable: For any $f$-list assignment $L$ on  $G$, let $v\in V(G)$, $\alpha\in L(v)$, and $S=\{x\in V(G): \alpha\in L(x)\}$. As $G$ is online $f$-choosable, there is an independent set $I\subseteq S$ with $G-I$ online $f'$-choosable. Color each vertex of $I$ with $\alpha$, and let $L'$ be the $f'$-list assignment on $G-I$ obtained by removing $\alpha$ from every list. Arguing inductively (with a trivial basis) yields an $L'$-coloring of $G-I$, and an $L$-coloring of $G$. 

When $f(v) \DefinedAs k-1$ for all $v \in V(G)$, we say that $G$ is \emph{online $k$-list-critical} if $G$ is not online $f$-choosable, 
but every proper subgraph $H$ of $G$ is online $\restr{f}{V(H)}$-choosable.

\begin{MainLemma} Let $G = (V,E)$ be a nonempty graph and $\func{f}{V}{\IN}$ with $f(v)\le d_{G}(v)+1$
for all $v\in V$. If there is an independent $A\subseteq V$ such that 
\[
\size{A,V}\ge\sum_{v\in V}d_{G}(v)+1-f(v),
\]
then $G$ has a nonempty induced subgraph $H$ that is (online) $f_{H}$-choosable
where $f_{H}(v)\DefinedAs f(v)+d_{H}(v)-d_{G}(v)$ for $v\in V(H)$. \end{MainLemma}

As a simple first application, we show that Brooks' Theorem can be derived from
the simple bound on the independence number it implies. In particular, the proof shows that Brooks' Theorem for list coloring \cite{vizing1976} 
and online list coloring \cite{Hladky} follow from Brooks' Theorem for ordinary coloring.

\begin{BrooksTheoremAlpha} If $G$ is a graph with $\Delta\DefinedAs \Delta(G)\ge3$ and
$K_{\Delta+1}\not\subseteq G$, then $\alpha(G)\ge\frac{|G|}{\Delta}$. 
\end{BrooksTheoremAlpha}

To prove Brooks' Theorem, consider a minimal counterexample $G$. By minimality $G$
is regular: if $d(x)<\Delta$ then $G-x$ has a $\Delta$-coloring, and some color
is used on no neighbor of $x$, a contradiction. Hence for any maximum independent
set $A$ in $G$, Brooks' Theorem for Independence Number gives $\size{A,V(G)}\ge|A|\Delta\ge|G|$.
Applying the Main Lemma with $f(v)\DefinedAs d_{G}(v)$ gives a nonempty induced
subgraph $H$ of $G$ that is (online) $d_{H}$-choosable. So, after $\Delta$-coloring
$G-H$ by minimality of $G$, we can finish the coloring on $H$, a contradiction.

A bound like Brooks' Theorem in terms of the Ore-degree was given by Kierstead and
Kostochka \cite{kierstead2009ore} and subsequently the required lower bound on $\Delta$
was improved in \cite{rabern2010a, krs_one, rabern2012partitioning}. For example,
we have the following.

\begin{defn} The \emph{Ore-degree} of an edge $xy$ in a graph $G$ is $\theta(xy)\DefinedAs d(x)+d(y)$.
The \emph{Ore-degree} of a graph $G$ is $\theta(G)\DefinedAs\max_{xy\in E(G)}\theta(xy)$.
\end{defn}

\begin{thm}\label{RegularOre} Every graph with $\theta\ge10$ and $\omega\leq\frac{\theta}{2}$
is $\floor{\frac{\theta}{2}}$-colorable. 
\end{thm}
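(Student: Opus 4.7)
The plan is to argue by contradiction, modeled on the proof of Brooks' Theorem above. Let $G$ be a minimum counterexample and set $k\DefinedAs\floor{\theta(G)/2}\ge 5$. Then $G$ is $(k+1)$-critical, $\omega(G)\le k$, and $\theta(G)\le 2k+1$; by minimality $\delta(G)\ge k$. If $\theta(G)=2k$, then $d(x)+d(y)\le 2k$ combined with $\delta\ge k$ forces $d(x)=d(y)=k$ on every edge, so $G$ is $k$-regular; since $G\ne K_{k+1}$ and $k\ge 3$, Brooks' Theorem gives $\chi(G)\le k$, contradicting $(k+1)$-criticality. So assume $\theta(G)=2k+1$. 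Let $H$ be the set of vertices of degree $\ge k+1$ (``high'') and $L=V(G)\sm H$ (``low''). An edge inside $H$ would force $\theta\ge 2k+2$, so $H$ is independent; and for $v\in H$, each neighbor has degree $\le 2k+1-(k+1)=k$, which together with $\delta\ge k$ forces $d_G(v)=k+1$ and $N_G(v)\subseteq L$.

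Next the plan is to apply the Main Lemma to $G$ with $f\equiv k$ (legal since $k\le\delta(G)+1$). Using $2\size{G}=k|L|+(k+1)|H|=k|V(G)|+|H|$, the hypothesis reduces to finding an independent $A\subseteq V(G)$ with
\[
\sum_{v\in A}d_G(v)\;\ge\;|V(G)|+|H|.
\]
Given such $A$, the Main Lemma produces a nonempty induced subgraph $R$ that is $f_R$-choosable for $f_R(v)=k-d_G(v)+d_R(v)$. By $(k+1)$-criticality, $G-V(R)$ admits a $k$-coloring, under which each $v\in V(R)$ has at least $k-(d_G(v)-d_R(v))=f_R(v)$ colors available from $\irange{k}$; then $f_R$-choosability extends this to a $k$-coloring of $G$, contradicting $\chi(G)=k+1$.

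It remains to construct $A$. The natural attempt is $A=H\cup I$, where $I$ is a maximum independent set in $G':=G[L\sm N_G(H)]$. Since $\Delta(G')\le k$, $\omega(G')\le\omega(G)\le k$, and no component of $G'$ is $K_{k+1}$, Brooks' Theorem gives $\chi(G')\le k$, hence $|I|\ge|V(G')|/k=(|L|-|N_G(H)|)/k$. Substituting and simplifying reduces the displayed inequality to the combinatorial estimate
\[
|N_G(H)|\;\le\;(k-1)|H|,
\]
that is, the $(k+1)$-sized low-neighborhoods of the vertices of $H$ must overlap by at least $2|H|$ in total.

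The hard part is thus to establish this overlap bound, or otherwise to produce a suitable $A$ when it fails. This is where the deeper structure of $(k+1)$-critical graphs enters: by the Gallai--Dirac theorem, the low-vertex subgraph $G[L]$ is a Gallai forest, whose blocks are complete graphs or odd cycles. A careful case analysis of how the vertices of $H$ attach to these blocks, exploiting $\omega\le k$ and the criticality of $G$ to rule out bad configurations, should either force the required overlap or reveal a smaller reducible configuration (to which the Main Lemma applies with a modified choice of $A$). Executing this structural case analysis is the substantive content of the proof and drives the numerical threshold $\theta\ge 10$.
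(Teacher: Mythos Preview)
The paper does not prove Theorem~\ref{RegularOre}; it is quoted from earlier work (Kierstead--Kostochka, with the bound on $\theta$ later improved in \cite{rabern2010a,krs_one,rabern2012partitioning}). Those proofs do not go through the Main Lemma; they use Mozhan-type partitions or, in the Kostochka--Yancey version, the sharp edge bound for $k$-critical graphs together with a direct kernel-perfect orientation argument. So there is no ``paper's own proof'' to compare against, and your outline is proposing a genuinely different route.

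That route, however, has a real gap. You correctly reduce the Main Lemma hypothesis (with $f\equiv k$) to finding an independent set $A$ with $\sum_{v\in A}d(v)\ge |G|+|\HH|$, and with the choice $A=\HH\cup I$, $I\subseteq L\sm N(\HH)$, this becomes $|N(\HH)|\le (k-1)|\HH|$. But this inequality is simply false in general: already when $|\HH|=1$ it demands $k+1\le k-1$. Your fallback (``or otherwise produce a suitable $A$ when it fails'') and the promised ``careful case analysis of how the vertices of $\HH$ attach to the Gallai-tree blocks'' are exactly the content that is missing, and you give no indication of how to carry it out. In effect the proposal stops at the point where the work begins.

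There is also strong evidence that the Main Lemma alone cannot reach $\theta\ge 10$. The paper pursues essentially your strategy for the (online) list-coloring analogue: it takes $J=\HH\cup I$ with $I$ a maximum independent set among low vertices having no high neighbour, bounds $\size{J,V}$ via the Gallai-forest counting Lemma~\ref{GallaiTreeCount}, and combines this with the external edge bound of Corollary~\ref{SigmaCorollary}. Even with all that machinery the argument closes only for $\theta\ge 18$ (Theorems~\ref{OurListOre} and~\ref{OurListOnlineOre}). Getting down to $\theta\ge 10$ for ordinary colouring genuinely requires the sharper tools in the cited references, not a refinement of the independent-set approach you sketch.
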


Another method for achieving the tightest of these results on Ore-degree was given
by Kostochka and Yancey \cite{kostochkayancey2012ore}. Their proof combined their
new lower bound on the number of edges in a color critical graph together with their
list coloring lemma derived via the kernel lemma. The Main Lemma improves this latter
lemma and, in a similar way, we use it in combination with our lower bound on the
number of edges in online list-critical graphs \cite{OreVizing} to prove an Ore-degree
version of Brooks' Theorem for online list coloring.

Now we introduce the key graph theoretic parameter for this paper. 

\begin{defn} The \emph{maximum independent cover number }of a graph $G$ 
is the maximum $\mic(G)$ of $\sum_{v\in I}d_{G}(v)~(=\|I,V(G)\|)$ over all independent sets $I$
of $G$. A set $I$ that witnesses this maximum is said to be \emph{optimal}. 
\end{defn}

We work in terms of a class of graphs that is more general than the class of online $k$-list-critical graphs.  
Basically, we want graphs $G$ that have no induced subgraph $H$ such that every online $(k-1)$-list-coloring of $G-H$ can be extended to $H$.
We can replace $k-1$ with $\delta(G)$ and still get a generalization of online $k$-list-critical since an online $k$-list-critical graph has minimum degree at least $k-1$.
Doing so, we get the following graph class that does not depend on $k$, but still does everything we need.

\begin{defn} A graph $G$ is \emph{OC-reducible} to $H$ if $H$ is a nonempty induced
subgraph of $G$ which is online $f_{H}$-choosable where $f_{H}(v)\DefinedAs\delta(G)+d_{H}(v)-d_{G}(v)$
for all $v\in V(H)$. If $G$ is not OC-reducible to any nonempty induced subgraph,
then it is \emph{OC-irreducible}. \end{defn}

The Main Lemma can be used to give another lower bound on the number of edges in
a critical graph $G$. Viewed differently, it gives an upper bound on $\mic(G)$.

\begin{repthm}{ConsantListMicStrength} Every OC-irreducible graph $G$ satisfies
 $\mic(G)\leq2\size{G}-(\delta(G)-1)\card{G}-1$. \end{repthm}

\noindent This quickly gives the aforementioned Ore degree version of Brooks' Theorem
for list coloring.

\begin{repthm}{OurListOre} Every graph with $\theta\ge18$ and $\omega\leq\frac{\theta}{2}$
is $\floor{\frac{\theta}{2}}$-choosable. 
\end{repthm}

\noindent %
Note that using Kostochka and Stiebitz's lower bound on the number of edges in a
list critical graph \cite{kostochkastiebitzedgesincriticalgraph} gives a weakened version of Theorem \ref{OurListOre}
with $\theta\ge54$ instead of $\theta\ge 18$. Similarly, we get the online version.

\begin{repthm}{OurListOnlineOre} Every graph with $\theta\ge18$ and $\omega\leq\frac{\theta}{2}$
is online $\floor{\frac{\theta}{2}}$-choosable. \end{repthm}

We expect that Theorems \ref{OurListOnlineOre} and \ref{OurListOre} actually hold
for $\theta\ge10$. In the regular coloring case, it was shown in \cite{krs_one}
that the only exception when $\theta\ge8$ is the graph $O_{5}$; where $O_n$ is the graph formed from the disjoint union of $K_n - xy$ and
$K_{n-1}$ by joining $\floor{\frac{n-1}{2}}$ vertices of the $K_{n-1}$ to $x$
and the other $\ceil{\frac{n-1}{2}}$ vertices of the $K_{n-1}$ to $y$.
Again, the expectation is that the same result will hold for Theorems \ref{OurListOnlineOre}
and \ref{OurListOre}.

A simple probabilistic argument gives a reasonable bound on $\mic(G)$ for
triangle-free graphs and we get the following. 

\begin{repcor}{tricolor} 
Triangle-free graphs are online $\parens{\Delta+1-\floor{\frac{1}{4}\lg(\Delta)}}$-choosable.
\end{repcor}

In the final section, we characterize Gallai trees as the connected graphs $G$ that have $\mic(G) = |G| - 1$, and some further characterizations.
\section{Proving the Main Lemma}

A \emph{kernel} in a digraph $D$ is an independent set $I\subseteq V(D)$ such that
each vertex in $V(D)\sm I$ has an edge into $I$. A digraph in which every induced
subdigraph has a kernel is \emph{kernel-perfect}.

\begin{KernelLemma} Let $G = (V,E)$ be a graph and $\func{f}{V}{\IN}$. If $G$ has
a kernel-perfect orientation such that $f(v)\ge d^{+}(v)+1$ for each $v\in V$,
then $G$ is online $f$-choosable. \end{KernelLemma}

\begin{lem}[Kostochka and Yancey \cite{kostochkayancey2012ore}]\label{KernelPerfect}
Let $A$ be an independent set in a graph $G$ and let $B\DefinedAs V(G)\sm A$. Any
digraph $D$ created from $G$ by replacing each edge in $G[B]$ by a pair of opposite
arcs and orienting the edges between $A$ and $B$ arbitrarily is kernel-perfect.
\end{lem}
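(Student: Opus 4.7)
The plan is induction on $|V(D)|$. Since every induced subdigraph of $D$ inherits the stated structure---$A\cap S$ is still independent in $G$, every $G$-edge inside $B\cap S$ is still a pair of opposite arcs in $D[S]$, and the cross arcs remain arbitrary---by induction on $|S|$ it suffices to exhibit a kernel of $D$ itself. The case $|V(D)|=0$ is vacuous, and when $A=\emptyset$ any maximal independent set of $G[B]$ is a kernel because every arc in $D$ then comes in an opposite-arc pair.

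Assume $A\neq\emptyset$ and pick $a\in A$. Apply the induction hypothesis to $D-a$, obtaining a kernel $I'$. If $N_D^+(a)\cap I'\neq\emptyset$, then $a$ already has an out-arc into $I'$ and $I'$ is a kernel of $D$. If $a$ has no neighbor in $I'$ at all (neither $N_D^+(a)\cap I'$ nor $N_D^-(a)\cap I'$ meets $I'$), then $I'\cup\set{a}$ is independent in $D$ and dominates every vertex outside it, hence is a kernel. The remaining subcase is $N_D^+(a)\cap I'=\emptyset$ while $N_D^-(a)\cap I'\neq \emptyset$: here $a$ is not dominated by $I'$, yet $a$ cannot be added to $I'$ directly.

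In this crux subcase I would reapply the induction hypothesis to the still-smaller digraph $D-\set{a}-N_D^-(a)$, getting a kernel $J$, and take $I\DefinedAs J\cup\set{a}$ as the candidate kernel of $D$. Each $z\in N_D^-(a)$ is then dominated by its arc $z\to a\in I$, and vertices outside $N_D^-(a)\cup \set{a}$ are either in $J$ or dominated by $J$ inductively; independence reduces to the single requirement $J\cap N_D^+(a)=\emptyset$. This last requirement is the main obstacle, since nothing in the inductive call directly forbids $a$'s out-neighbors from appearing in $J$. I would resolve it by exploiting the double-arc structure of $D[B]$: any candidate conflict $u\in J\cap N_D^+(a)\subseteq B$ has opposite arcs to all its $G[B]$-neighbors, so one can swap $u$ out of $J$ for an appropriate $G[B]$-neighbor while preserving both independence and domination, or iteratively enlarge the set of vertices removed before the inductive call so as to forbid out-neighbors of $a$ from appearing in $J$, arguing that domination of the newly removed vertices is inherited through the arcs they still enjoy to $I$. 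Either route hinges on the fact that, because $A$ is independent in $G$, every clique of the underlying graph of $D$ has at most one $A$-vertex, and a short case check shows each such clique contains a vertex receiving arcs from all the others---either the $A$-vertex itself (when it is a sink of the clique) or any $B$-vertex to which the $A$-vertex has an out-arc. This ``clique-sink'' property, which is the conceptual reason the swap works, is the classical sufficient condition for kernel-perfectness and so closes the argument.
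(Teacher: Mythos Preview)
Your induction is set up correctly and the first two subcases are fine, but the ``crux subcase'' is not actually proved. You apply induction to $D-\{a\}-N_D^-(a)$ to obtain a kernel $J$ and then need $J\cap N_D^+(a)=\emptyset$; you acknowledge this obstacle but never overcome it. The ``swap'' idea---replacing a bad $u\in J\cap N_D^+(a)$ by some $G[B]$-neighbor---does not come with any argument that domination survives the swap, and in general it will not: the neighbor you swap in may fail to dominate vertices that only $u$ dominated, or may itself lie in $N_D^+(a)$, so you are back where you started. The fallback, invoking the ``clique-sink'' property as ``the classical sufficient condition for kernel-perfectness,'' is simply false for general graphs: a directed odd cycle has a sink in every clique (each clique is a single arc) yet has no kernel. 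The Boros--Gurvich theorem you seem to have in mind requires the underlying graph to be perfect, which is not assumed here. So as written the proof is incomplete.

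The paper's argument avoids this difficulty by choosing the vertex to delete from $B$ rather than from $A$. If $A$ itself is not already a kernel, then some $v\in B$ has no out-neighbor in $A$; but then \emph{every} $G$-neighbor of $v$ has an arc into $v$ (the $B$-neighbors via the double arcs, the $A$-neighbors because all $v$--$A$ arcs point toward $v$). Taking a kernel $K$ of $D-v-N(v)$ by minimality and returning $K\cup\{v\}$ then works immediately, with no residual obstacle: every vertex of $N(v)$ is dominated by $v$, and independence is automatic since $N(v)$ was removed. The moral is that the asymmetry in the construction---single arcs only occur between $A$ and $B$---means the ``safe'' vertex to peel off is one in $B$ that happens to be a sink relative to $A$, not an arbitrary vertex of $A$.
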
 
\begin{proof} Let $G$ be a minimum counterexample, and let $D$ be a
digraph created from $G$ that is not kernel-perfect. To get a contradiction it suffices
to construct a kernel in $D$, since each subdigraph has a kernel by minimality.
Either $A$ is a kernel or there is some $v\in B$ which has no outneighbors in $A$.
In the latter case, each neighbor of $v$ in $G$ has an inedge to $v$, so a kernel
in $D-v-N(v)$ together with $v$ is a kernel in $D$. 
\end{proof}

The following lemma is folklore and can be derived from Hall's theorem by vertex
splitting. It also follows by taking an arbitrary orientation and repeatedly reversing
paths if doing so gets a gain.

\begin{lem}\label{InOrientations} Let $G = (V,E)$ be a graph and $\func{g}{V}{\IN}$.
Then $G$ has an orientation such that $d^{-}(v)\ge g(v)$ for all $v\in V$ if and only if
for every $X \subseteq V$, we have

\[
\size{X}+\size{X,V\sm X}\ge\sum_{v\in X} g(v).
\]
\end{lem}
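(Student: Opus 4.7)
The plan is to reduce the statement to Hall's theorem via the standard ``slot'' construction. Necessity is a direct edge-count: I would fix an orientation with $d^{-}(v)\ge g(v)$, sum the in-degrees over $v\in X$, and observe that this sum counts every edge inside $X$ exactly once and every edge between $X$ and $V\sm X$ at most once. This yields $\sum_{v\in X}g(v)\le\sum_{v\in X}d^{-}(v)\le\size{X}+\size{X,V\sm X}$.

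For sufficiency, I would build an auxiliary bipartite graph $B$ with parts $S\DefinedAs\set{(v,i): v\in V,\ 1\le i\le g(v)}$ and $E$, where a slot $(v,i)$ is adjacent to every edge of $G$ incident to $v$. A matching in $B$ that saturates $S$ yields a valid orientation of $G$: orient each matched edge toward the vertex whose slot it is matched to, and orient any unmatched edge arbitrarily. Then each $v$ receives at least $g(v)$ in-edges, as required.

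To verify Hall's condition $|N_{B}(T)|\ge|T|$ for every $T\subseteq S$, I would fix $T$ and set $X\DefinedAs\setb{v}{V}{T\text{ contains some slot of }v}$. Then $|T|\le\sum_{v\in X}g(v)$ since $T$ lies inside the slots of $X$, and $N_{B}(T)$ is exactly the set of edges of $G$ incident to $X$, giving $|N_{B}(T)|=\size{X}+\size{X,V\sm X}$. The hypothesis applied to $X$ therefore delivers $|N_{B}(T)|\ge\sum_{v\in X}g(v)\ge|T|$, so Hall's theorem supplies the matching.

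The only content of the proof is Hall's theorem; everything else is bookkeeping, so there is no real obstacle. If a self-contained argument is preferred, one could instead use the ``reverse a path for gain'' approach mentioned in the excerpt: start with any orientation, and whenever some $v$ has $d^{-}(v)<g(v)$, search backward from $v$ along reversible paths, verifying that either a reversal strictly increases the potential $\sum_{u}\min(d^{-}(u),g(u))$, or the set of vertices reachable by such a search directly witnesses a violation of the hypothesis.
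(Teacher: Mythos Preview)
Your proposal is correct and follows exactly the approach the paper indicates: the paper does not give a detailed proof but simply states the lemma is folklore, ``can be derived from Hall's theorem by vertex splitting,'' and also mentions the path-reversal alternative --- precisely the two arguments you outline. Your slot construction is the standard vertex-splitting reduction to Hall, and your verification of Hall's condition and the necessity direction are both sound.
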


For independent $A\subseteq V(G)$, we write $G_{A}$ for the bipartite subgraph
$G-E(G-A)$ of $G$. Thus $G$ is the edge-disjoint union of $G_{A}$ and $G[V(G)\sm A]$.

\begin{lem}\label{MicStrength} Let $G = (V,E)$ be a graph and $\func{f}{V}{\IN}$ with
$f(v)\le d_{G}(v)+1$ for all $v\in V$. If there is an independent $A\subseteq V$
such that each $X \subseteq V(G_{A})$ satisfies 

\[
\size{X}_{G_{A}}+\size{X,V(G_{A})\sm X}_{G_{A}}\ge\sum_{v\in X}(d_{G}(v)+1-f(v)).
\]

\noindent then $G$ is online $f$-choosable. 
\end{lem}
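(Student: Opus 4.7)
The plan is to produce a kernel-perfect orientation $D$ of $G$ with $d^+_D(v)\le f(v)-1$ for every $v\in V$, then invoke the Kernel Lemma. Setting $B\DefinedAs V\sm A$, I will orient $G$ in the style of Lemma \ref{KernelPerfect}: replace each edge of $G[B]$ by a pair of opposite arcs, and orient the edges of $G_A$ as single arcs, to be chosen. Lemma \ref{KernelPerfect} guarantees that the result is kernel-perfect no matter how the bipartite edges of $G_A$ are oriented, so the work is in choosing the orientation of $G_A$ to meet the out-degree budget.

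For $v\in A$, independence gives $d^+_D(v)=d^+_{G_A}(v)$; for $v\in B$, the double arcs inside $B$ contribute $d_G(v)-d_{G_A}(v)$ to $d^+_D(v)$, so $d^+_D(v)=d^+_{G_A}(v)+d_G(v)-d_{G_A}(v)$. In both cases (using $d_{G_A}(v)=d_G(v)$ when $v\in A$), the constraint $d^+_D(v)\le f(v)-1$ rearranges to the in-degree demand
\[
d^-_{G_A}(v)\ge d_G(v)+1-f(v)\DefinedAs g(v),
\]
and the hypothesis $f(v)\le d_G(v)+1$ ensures $g(v)\ge 0$.

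Now I apply Lemma \ref{InOrientations} to the bipartite graph $G_A$ with lower-bound function $g$: an orientation of $G_A$ satisfying $d^-_{G_A}(v)\ge g(v)$ for all $v$ exists if and only if every $X\subseteq V(G_A)$ satisfies
\[
\size{X}_{G_A}+\size{X,V(G_A)\sm X}_{G_A}\ge\sum_{v\in X}g(v),
\]
which is precisely the lemma's hypothesis. Combining such an orientation of $G_A$ with the double-arc orientation of $G[B]$ yields the desired $D$, and the Kernel Lemma concludes that $G$ is online $f$-choosable.

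The main thing to watch is the bookkeeping that converts the uniform out-degree bound on $D$ into the in-degree demand on $G_A$, since for $v\in B$ the double arcs already eat $d_G(v)-d_{G_A}(v)$ of the out-degree budget; once this reduction is done correctly, the hypothesis is exactly what Lemma \ref{InOrientations} needs, and there is no further obstacle.
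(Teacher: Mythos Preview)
Your argument is correct and follows exactly the paper's approach: apply Lemma~\ref{InOrientations} to $G_A$ with $g(v)=d_G(v)+1-f(v)$ to obtain the bipartite orientation, double the edges inside $B$, invoke Lemma~\ref{KernelPerfect} for kernel-perfection, and finish with the Kernel Lemma. The bookkeeping converting the out-degree bound on $D$ into the in-degree demand on $G_A$ is done the same way in both proofs.
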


\noindent \begin{proof} 
Applying Lemma \ref{InOrientations} on $G_{A}$ with $g(v)\DefinedAs d_{G}(v)+1-f(v)$
for all $v\in V(G_{A})$ gives an orientation of $G_{A}$ where $d^{-}(v)\ge d_{G}(v)+1-f(v)$
for each $v\in V(G_{A})$. Make an orientation $D$ of $G$ by using this orientation
of $G_{A}$ for the edges between $A$ and $V(G)\sm A$ and replacing each edge in $G-A$
by a pair of opposite arcs. For $v\in V(D)$, where $d_{G-A}(v)=0$ if $v\in A$,
\[
d^{+}(v)\le d_{G-A}(v)+d_{G_{A}}(v)-(d_{G}(v)+1-f(v))=f(v)-1,
\]
so $f(v)\ge d^{+}(v)+1$. By Lemma \ref{KernelPerfect}, $D$ is kernel-perfect,
so the Kernel Lemma implies $G$ is online $f$-choosable. \end{proof}

\begin{proof}[Proof of Main Lemma] Let $A\subseteq V$ be an independent set
with 
\[
\size{A,V}\ge\sum_{v\in V}\parens{d_{G}(v)+1-f(v)}.
\]
Choose a nonempty induced subgraph $H$ of $G$ with $\size{H_{A}}\ge\sum_{v\in V(H)}\parens{d_{H}(v)+1-f_{H}(v)}$
minimizing $\card{H}$ (we can make this choice since $G$ is a such a subgraph).
Suppose $H$ is not online $f_{H}$-choosable. Then, by Lemma \ref{MicStrength},
we have $X \subseteq V(H_{A})$ with
\[\size{X}_{H_{A}}+\size{X,V(H_{A})\sm X}_{H_{A}} < \sum_{v\in X}(d_{H}(v)+1-f_H(v)).\]
Now $X \ne V(H)$ by our assumption on $\size{H_{A}}$, hence $Z\DefinedAs H-X$ is a
nonempty induced subgraph of $G$ with 
\begin{align*}
\size{Z_{A}} & =\size{H_{A}}-\size{X}_{H_{A}}-\size{X,V(H_{A})\sm X}_{H_{A}}\\
 & >\sum_{v\in V(H)}\parens{d_{H}(v)+1-f_{H}(v)}-\sum_{v\in X}\parens{d_{H}(v)+1-f_{H}(v)}\\
 & =\sum_{v\in V(Z)}\parens{d_{Z}(v)+1-f_{Z}(v)},
\end{align*}
contradicting the minimality of $\card{H}$. \end{proof}

As a special case we get:

\begin{thm}\label{ConsantListMicStrength} Every OC-irreducible graph $G = (V,E)$ satisfies
 $$\mic(G)\leq2\size{G}-(\delta(G)-1)\card{G}-1.$$\end{thm}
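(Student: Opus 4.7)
The plan is to derive Theorem \ref{ConsantListMicStrength} as a direct specialization of the Main Lemma with the constant weight $f \equiv \delta(G)$, using the definition of OC-irreducibility to discharge the conclusion.

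First I would verify that the constant choice $f(v) \DefinedAs \delta(G)$ is admissible in the Main Lemma: since $\delta(G) \le d_G(v) \le d_G(v)+1$ for every $v \in V$, the hypothesis $f(v) \le d_G(v)+1$ is automatic. Next I would compute the threshold in the Main Lemma for this $f$:
\[
\sum_{v\in V}\parens{d_G(v)+1-f(v)} \;=\; 2\size{G} + \card{G} - \delta(G)\card{G} \;=\; 2\size{G}-(\delta(G)-1)\card{G}.
\]

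Now suppose toward a contradiction that $\mic(G) \ge 2\size{G}-(\delta(G)-1)\card{G}$. Then there is an independent set $A \subseteq V$ witnessing this, i.e., $\size{A,V} \ge \sum_{v\in V}(d_G(v)+1-f(v))$. Applying the Main Lemma yields a nonempty induced subgraph $H$ of $G$ that is online $f_H$-choosable, where $f_H(v) = f(v)+d_H(v)-d_G(v) = \delta(G)+d_H(v)-d_G(v)$. But this is exactly the defining property of $G$ being OC-reducible to $H$, contradicting the assumption that $G$ is OC-irreducible. Therefore $\mic(G) \le 2\size{G}-(\delta(G)-1)\card{G}-1$.

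There is no real obstacle here; the content of the theorem is packaged entirely inside the Main Lemma, and the argument is just the contrapositive specialization $f\equiv\delta(G)$ together with the observation that the resulting $f_H$ matches the constant-list definition of OC-reducibility verbatim. The only minor thing to be careful about is the strict inequality at the end: the Main Lemma gives a reducible subgraph as soon as $\size{A,V}$ meets the threshold, so OC-irreducibility forces $\size{A,V}$ to fall \emph{strictly} below it for every independent $A$, which gives the ``$-1$'' in the stated bound.
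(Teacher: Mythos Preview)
Your proof is correct and follows essentially the same route as the paper: set $f\equiv\delta(G)$, compute the threshold $\sum_{v}(d_G(v)+1-f(v))=2\size{G}-(\delta(G)-1)\card{G}$, and note that if $\mic(G)$ met this threshold the Main Lemma would produce a nonempty induced $H$ that is online $f_H$-choosable with $f_H(v)=\delta(G)+d_H(v)-d_G(v)$, contradicting OC-irreducibility. The paper phrases this as a direct contrapositive rather than a contradiction, but the argument is identical.
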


\begin{proof}As $G$ is OC-irreducible, it has no proper, induced, online $f_{H}$-choosable
subgraph $H$, where $f_{H}(v)=\delta(G)+d_{H}(v)-d_{G}(v)$. Let $A \subseteq V$ be an optimal
set. By the Main Lemma 
\[
\mic(G)=\size{A,V} <\sum_{v\in V}(d_{G}(v)+1-\delta(G))=2\size{G} +|G|(1-\delta(G)).\qedhere
\]
 \end{proof}
 

\section{Ore version of Brooks' Theorem for (online) list coloring}
For a graph $G$, let $\HH(G)$ be the subgraph of $G$ induced on the vertices of
degree greater than $\delta(G)$ and $\L(G)$ the subgraph of $G$ induced on the
vertices of degree $\delta(G)$.

\begin{lem}\label{OrePrecursor1} All OC-irreducible graphs $G$ with $\HH(G)$
 edgeless and $\Delta(G)=\delta(G)+1$ satisfy
\begin{enumerate}
 \item $\mic(G)<\card{\HH(G)}+\card{G}$, and
 \item $\parens{\delta-1}\card{\HH(G)}< \card{\L(G)}$, and $2\size{G}<\parens{\delta(G)+\frac{1}{\delta(G)}}\card{G}$.
 \end{enumerate}
\end{lem}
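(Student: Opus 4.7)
The plan is to run everything off the single identity $2\|G\| = \delta n + h$, where I abbreviate $\delta = \delta(G)$, $h = \card{\HH(G)}$, $\ell = \card{\L(G)}$, and $n = \card{G} = h+\ell$. Since $\HH(G)$ consists of vertices of degree strictly greater than $\delta$ and $\Delta(G)=\delta+1$, every vertex of $\HH(G)$ has degree exactly $\delta+1$, while every vertex of $\L(G)$ has degree $\delta$; summing gives $2\|G\|=(\delta+1)h+\delta\ell=\delta n+h$.

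For (1), I would apply Theorem \ref{ConsantListMicStrength} directly. It yields
\[
\mic(G)\le 2\|G\|-(\delta-1)n-1=(\delta n+h)-(\delta-1)n-1=n+h-1<\card{G}+\card{\HH(G)},
\]
which is the desired bound.

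For (2), the one genuinely new ingredient is the observation that, because $\HH(G)$ is edgeless by hypothesis, the vertex set $A\DefinedAs V(\HH(G))$ is itself independent in $G$. Then
\[
\mic(G)\ge \size{A,V}=\sum_{v\in A}d_{G}(v)=(\delta+1)h.
\]
Combining this lower bound with the upper bound from (1) gives $(\delta+1)h\le n+h-1$, hence $\delta h\le n-1<n=h+\ell$, i.e.\ $(\delta-1)h<\ell$. Rearranging $\delta h<n$ as $h<n/\delta$ and substituting into $2\|G\|=\delta n+h$ yields
\[
2\|G\|<\delta n+\tfrac{n}{\delta}=\parens{\delta+\tfrac{1}{\delta}}\card{G},
\]
which is the second assertion of (2).

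There is no real obstacle here: the proof is essentially bookkeeping once one has Theorem \ref{ConsantListMicStrength} in hand. The only step requiring any insight is using the edgelessness of $\HH(G)$ to pick $A=V(\HH(G))$ as an independent test set for $\mic(G)$; this is what forces the two lower-bound quantities on $\ell$ and $2\|G\|$. Everything else is a one-line substitution using $2\|G\|=\delta n + h$.
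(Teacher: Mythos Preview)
Your proof is correct and follows essentially the same approach as the paper: both establish the degree-sum identity $2\size{G}=\delta\card{G}+\card{\HH(G)}$, apply Theorem~\ref{ConsantListMicStrength} for the upper bound on $\mic(G)$, and use the independent set $V(\HH(G))$ to get $(\delta+1)\card{\HH(G)}\le\mic(G)$, then chain the resulting inequalities. The only cosmetic difference is that you invoke the theorem in its ``$\le\cdots-1$'' form while the paper uses the equivalent strict inequality.
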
 
\begin{proof} Put $\delta\DefinedAs\delta(G)$ and $\Delta\DefinedAs\Delta(G)$.
 As  $\Delta=\delta+1$, $2\size{G}=\delta\card{G}+\card{\HH(G)}$. As $\HH(G)$ is edgeless, and using Theorem~\ref{ConsantListMicStrength},
   $$(\delta+1)\card{\HH(G)}\le \mic(G)<2\size{G}+\card{G}(1-\delta)
   =\card{\HH(G)}+\card{G}.$$ 
Thus (1) holds, and  $\delta\card{\HH(G)}<\card{G}$, $\parens{\delta-1}\card{\HH(G)}< \card{\L(G)}$, and  $2\size{G}<(\delta+1/\delta)\card{G}$.
 \end{proof}

To break up our computations we reformulate Lemma \ref{OrePrecursor1} as an upper
bound on $\sigma$ where 
\[
\sigma(G)\DefinedAs\parens{\delta(G)-1+\frac{2}{\delta(G)}}\card{\L(G)}-2\size{\L(G)}.
\]

\begin{lem}\label{OrePrecursor2} If $G$ is an OC-irreducible graph such that $\HH(G)$
is edgeless and $\Delta(G)=\delta(G)+1$, then $\sigma(G)<\parens{4-\frac{2}{\delta(G)}}\card{\HH(G)}$.
\end{lem}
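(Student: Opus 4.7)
The plan is to reduce the claimed inequality to part (2) of Lemma~\ref{OrePrecursor1} by a direct edge count, after which the conclusion is essentially a one-line algebraic rearrangement.

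Write $\delta \DefinedAs \delta(G)$, $h \DefinedAs \card{\HH(G)}$, $\ell \DefinedAs \card{\L(G)}$. The first step is to use the two hypotheses to pin down $\size{\L(G)}$ in terms of $h$ and $\ell$. Because $\HH(G)$ is edgeless and $\Delta(G) = \delta+1$, every vertex of $\HH(G)$ has all $\delta+1$ of its neighbors inside $\L(G)$; thus $\size{\HH(G),\L(G)} = (\delta+1)h$. Double-counting $\sum_{v\in \L(G)} d_G(v) = \delta \ell$ and subtracting off the edges to $\HH(G)$ gives
\[
2\size{\L(G)} \;=\; \delta \ell - (\delta+1) h.
\]

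Next, substitute this expression into the definition of $\sigma(G)$. After simplification one obtains the clean formula
\[
\sigma(G) \;=\; (\delta+1)h \;-\; \frac{\delta-2}{\delta}\,\ell.
\]
Rearranging the desired inequality $\sigma(G) < \parens{4 - \tfrac{2}{\delta}} h$ and clearing the factor $1/\delta$ turns it into
\[
(\delta-1)(\delta-2)\, h \;<\; (\delta-2)\, \ell.
\]
For $\delta \ge 3$ this is equivalent, after dividing by $\delta-2$, to $(\delta-1)h < \ell$, which is exactly part (2) of Lemma~\ref{OrePrecursor1}. (In the applications where this lemma is invoked $\delta$ will be large, so the small-$\delta$ cases can be dispatched trivially or by noting that the hypothesis forces $\delta \ge 3$ whenever the lemma is applied nontrivially.)

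The whole argument is thus a two-move reduction rather than a fresh estimate: an edge count exploiting the edgelessness of $\HH(G)$ to eliminate $\size{\L(G)}$ from $\sigma(G)$, followed by algebraic massaging back to the bound already proved in Lemma~\ref{OrePrecursor1}. The only real pitfall is bookkeeping in the simplification of $\sigma(G)$; there is no combinatorial obstacle to overcome, since all of the work has been done by the Main Lemma and its consequence Theorem~\ref{ConsantListMicStrength}.
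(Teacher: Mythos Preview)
Your proof is correct and essentially identical to the paper's. Both compute $2\size{\L(G)}=\delta\ell-(\delta+1)h$ from the edgelessness of $\HH(G)$, substitute into $\sigma(G)$ to get $\sigma(G)=(\delta+1)h+\bigl(\tfrac{2}{\delta}-1\bigr)\ell$, and then invoke $(\delta-1)h<\ell$ from Lemma~\ref{OrePrecursor1}(2); the only difference is the order in which the algebra is presented, and your remark about needing $\delta\ge3$ is a caveat the paper's proof also silently relies on (it multiplies by the negative factor $\tfrac{2}{\delta}-1$).
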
 

\begin{proof} Put $\delta\DefinedAs\delta(G)$. As $(1+\delta)\card{\HH(G)}=\size{\HH(G),\L(G)}=\delta\card{\L(G)}-2\size{\L(G)}$,
\begin{align*}
\sigma(G) & =\parens{\delta-1+\frac{2}{\delta}}\card{\L(G)}+(1+\delta)\card{\HH(G)}-\delta\card{\L(G)}\\
 & =\parens{\frac{2}{\delta}-1}\card{\L(G)}+(1+\delta)\card{\HH(G)}\\
 & <\parens{\frac{2}{\delta}-1}\parens{\delta-1}\card{\HH(G)}+(1+\delta)\card{\HH(G)} & (\textrm{Lemma~\ref{OrePrecursor1}.2)}\\
 & =\parens{4-\frac{2}{\delta}}\card{\HH(G)}.& & \qedhere
\end{align*}
\end{proof}


We need the following bound from \cite{OreVizing}. Put $\alpha_{k}\DefinedAs\frac{1}{2}-\frac{1}{(k-1)(k-2)}$
and let $c(G)$ be the number of components in $G$. \begin{cor}\label{SigmaCorollary}
If $G$ is an OC-irreducible graph with $\delta(G)\ge6$ and $\omega(G)\le\delta(G)$
such that $\HH(G)$ is edgeless, then $\sigma(G)\ge(\delta(G)-2)\alpha_{\delta(G)+1}\card{\HH(G)}+2(1-\alpha_{\delta(G)+1})c(\L(G))$.
\end{cor}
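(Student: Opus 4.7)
The plan is to deduce this from the sharper edge-count lower bound for OC-irreducible graphs established in \cite{OreVizing}, which is strictly stronger than what Theorem~\ref{ConsantListMicStrength} gives directly. The corollary is essentially that bound repackaged in $\sigma$ notation, so the main work is to import the input from \cite{OreVizing} and perform an algebraic translation.

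From the proof of Lemma~\ref{OrePrecursor2}, $\sigma(G) = (2/\delta - 1)\card{\L(G)} + (\delta+1)\card{\HH(G)}$, and the hypotheses that $\HH(G)$ is edgeless and $\Delta = \delta + 1$ give the identities $2\size{G} = \delta\card{\L(G)} + (\delta+1)\card{\HH(G)}$ and $\card{G} = \card{\L(G)} + \card{\HH(G)}$. Under these identities, the desired lower bound on $\sigma(G)$ is equivalent to an edge-count bound of the form $2\size{G} \ge \delta\card{G} + A_\delta\card{\HH(G)} + B_\delta\, c(\L(G))$ for specific constants $A_\delta, B_\delta$ depending on $\delta$ through $\alpha_{\delta+1}$. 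So the corollary reduces to producing that edge-count bound, and the final step is a routine rearrangement.

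The edge-count bound itself is the substantive contribution of \cite{OreVizing}. I would prove it by a structural analysis of $\L(G)$: the hypothesis $\omega(G) \le \delta(G)$, combined with OC-irreducibility, forces each block of $\L(G)$ (which, via Gallai-tree considerations transferred from classical critical graphs, is a clique or odd cycle) to have order at most $\delta$. The strategy is to construct a large independent set $A \subseteq V(G)$, typically of the shape $V(\HH(G)) \cup I$ with $I \subseteq V(\L(G))$ independent and disjoint from $N(V(\HH(G)))$, with $\card{I}$ controlled per-component in terms of $\alpha_{\delta+1}\card{V(C)} + (1-\alpha_{\delta+1})$, and then feed $A$ into the Main Lemma. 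The specific constant $\alpha_{\delta+1} = 1/2 - 1/(\delta(\delta-1))$ is calibrated to the worst case (components built from $K_\delta$-blocks), and the hypothesis $\delta \ge 6$ is what makes $\alpha_{\delta+1}$ large enough for the per-component slack to absorb both the avoidance of $N(V(\HH(G)))$ and the $(1-\alpha_{\delta+1})c(\L(G))$ term. The main obstacle is precisely this per-component analysis; the rest is bookkeeping with the identities above.
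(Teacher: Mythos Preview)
The paper does not prove this corollary at all: it is introduced with ``We need the following bound from \cite{OreVizing}'' and then simply stated. So there is no in-paper argument to compare against; the result is imported wholesale. You correctly identify this and correctly locate the substance in \cite{OreVizing}.

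One caution about your algebraic reduction: you invoke the identity $\sigma(G) = (2/\delta - 1)\card{\L(G)} + (\delta+1)\card{\HH(G)}$ from the proof of Lemma~\ref{OrePrecursor2}, but that identity relies on $(\delta+1)\card{\HH(G)} = \size{\HH(G),\L(G)}$, which in turn needs every high vertex to have degree exactly $\delta+1$, i.e., $\Delta(G)=\delta(G)+1$. The corollary as stated does \emph{not} assume $\Delta(G)=\delta(G)+1$, so your translation to a global edge-count bound on $G$ is not available in the stated generality. The sound route is to work directly with $\sigma(G) = (\delta-1+2/\delta)\card{\L(G)} - 2\size{\L(G)}$ and the identity $\size{\HH(G),\L(G)} = \delta\card{\L(G)} - 2\size{\L(G)}$ (which needs only that $\HH(G)$ is edgeless), together with the Gallai-forest structure of $\L(G)$ from Lemma~\ref{GallaiForestOC}; the per-block analysis you sketch then bounds $2\size{\L(G)}$ from above in terms of $\card{\L(G)}$, $c(\L(G))$, and the number of edges leaving $\L(G)$ (hence $\card{\HH(G)}$), which is what actually drives the inequality. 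Your overall picture of the argument in \cite{OreVizing} is reasonable, but the detour through $2\size{G}$ is both unnecessary and, as written, uses an unstated hypothesis.
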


By combining Lemma \ref{OrePrecursor2} with Corollary \ref{SigmaCorollary} we can
prove the Ore version of Brooks' Theorem for online list coloring for $\Delta\ge11$.
With a bit more work we will improve this to $\Delta\ge10$. First, we can squeeze a bit more out of Theorem~\ref{ConsantListMicStrength} by considering independent
sets of low vertices that have no high neighbors. Such sets can be added to $V(\HH(G))$
to get a cut with more edges. To apply this idea we need the following counting lemma. 
For a graph $G$ and $t\in\IN$,  let $V_t=\{v\in V(G):d(v)=t\}$, $G_t=G[V_t]$ and $\beta_t=\alpha(G_t)$.

To apply this idea, we need to better understand the structure of $\L(G)$ when $G$ is OC-irreducible. 
A \emph{Gallai tree} is a graph such that each block is a clique or odd cycle.  A \emph{Gallai forest} is a disjoint union of Gallai trees.
A classical result of Gallai \cite{gallai1963kritische} says that if $G$ is a $k$-critical graph, then $\L(G)$ is a Gallai forest.  
Borodin \cite{borodin1977criterion} and independently Erd\H{o}s, Rubin and Taylor \cite{erdos1979choosability} generalized Gallai's result to show that
a connected graph $H$ is $f$-choosable where $f(v) = d_H(v)$ for all $v \in V(H)$ if and only if $H$ is not a Gallai tree.  
In \cite{Hladky}, Hladk{\`y}, Kr{\'a}l and Schauz extended this result to online $f$-choosability.   

\begin{lem}[Hladk{\`y}, Kr{\'a}l and Schauz]\label{OnlineDegreeChoosable}
	A connected graph $H$ is online $f$-choosable where $f(v) = d_H(v)$ for all $v \in V(H)$ if and only if $H$ is not a Gallai tree.
\end{lem}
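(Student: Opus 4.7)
The plan is to prove the two implications separately: for the ``only if'' direction (Gallai tree implies not online $d_H$-choosable) I would construct an adversarial lister strategy, and for the ``if'' direction I would build a kernel-perfect orientation of $H$ with $d^+(v) \leq d_H(v) - 1$ for every $v$, then invoke the Kernel Lemma.

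For the forward direction, I would induct on the number of blocks of $H$. The single-block base cases are $K_n$ with $f \equiv n-1$, and odd cycles with $f \equiv 2$. For the clique, the lister repeatedly presents the entire vertex set; any independent reply colors at most one vertex and uses up one color from the remaining lists, so after $n-1$ rounds some vertex has an empty list. For an odd cycle, the lister again presents the entire vertex set each round, and a parity argument shows that whatever independent set the painter picks, the uncolored vertices form a shorter odd structure still facing degree-many colors, which fails by induction on length. For a Gallai tree $H$ with at least two blocks, let $B$ be an end-block with cut-vertex $v$. The lister executes the single-block strategy on $B$; the ``extra'' neighbor of $v$ outside $B$ is exactly compensated by the color $v$ will be forced into when the recursive adversarial strategy is applied to the smaller Gallai tree $H - (V(B) \setminus \{v\})$, leaving $B$ in the degenerate base-case configuration.

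For the converse direction, suppose $H$ is connected and not a Gallai tree, and induct on the number of blocks. The key case is $H = B$ itself 2-connected, neither a clique nor an odd cycle. I would choose an independent set $A \subseteq V(B)$ and then apply Lemma \ref{InOrientations} to $B_A$ with $g \equiv 1$ to produce an orientation of $B_A$ in which every vertex has in-degree at least $1$. Replacing each edge of $B[V(B) \setminus A]$ by opposite arcs and keeping this orientation on $B_A$ yields a kernel-perfect digraph by Lemma \ref{KernelPerfect}, in which every out-degree is at most $d_B(v) - 1$, so the Kernel Lemma gives online $d_B$-choosability. To glue blocks together, descend the block-cut tree: if an end-block $B'$ is a clique or odd cycle attached at cut-vertex $w$, first color $w$ (and everything on the other side of the cut) by applying the inductive hypothesis to $H - (V(B') \setminus \{w\})$, which is still connected and not a Gallai tree; then extend inside $B'$ using that $w$'s residual list has size $d_{B'}(w) - 1$, which is at least the out-degree of $w$ in a natural orientation of $B'$ with $w$ as a sink.

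The main obstacle is verifying the Hall-type condition of Lemma \ref{InOrientations} in the 2-connected base case, i.e., choosing $A$ so that $\|X\|_{B_A} + \|X, V(B_A) \setminus X\|_{B_A} \geq |X|$ for every $X \subseteq V(B_A)$. This amounts to the classical Borodin--Erd\H{o}s--Rubin--Taylor structural decomposition of 2-connected graphs that are neither complete nor odd cycles, and naturally splits into the subcase ``$B$ contains an even cycle'' (where orienting the cycle cyclically and attaching ears inward works) and the subcase ``$B$ contains two odd cycles joined by a (possibly trivial) path'' (where one orients each cycle toward the joining path). Making that structural dichotomy produce a single independent set $A$ and orientation simultaneously, rather than combining cases ad hoc, is the delicate point of the whole argument.
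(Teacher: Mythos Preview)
Your forward direction is more work than needed: since online $f$-choosability implies ordinary $f$-choosability, it is enough that a Gallai tree is not (offline) $d_0$-choosable, which is the classical Borodin/Erd\H{o}s--Rubin--Taylor result the paper simply cites. Your direct adversarial argument can be made to work, but the induction step as written is garbled---running the single-block strategy on $B$ first does not make sense because the cutvertex $v$ has $f(v)=d_H(v)>d_B(v)$ there; the clean route is to use disjoint colour palettes on $B$ and on $H'=H-(V(B)\setminus\{v\})$, so that the two bad assignments compose.

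For the converse, you and the paper share the same engine (Lemma~\ref{KernelPerfect} plus the Kernel Lemma), but the paper sidesteps precisely the ``delicate point'' you leave open. You ask for a single independent set $A\subseteq V(B)$ such that the Hall condition of Lemma~\ref{InOrientations} with $g\equiv 1$ holds on \emph{every} subset $X$, and you do not verify this. The paper avoids that verification entirely, and also avoids your block-by-block gluing, in three moves. First, Rubin's Block Lemma supplies an induced even cycle $C$ with at most one chord, on which an equitable bipartition already gives $\mic(G[C])\ge|C|$; Lemma~\ref{mmic} then propagates this to $\mic(G)\ge|G|$ for the whole connected graph (Theorem~\ref{micBasics}). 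Second, the Main Lemma needs only the \emph{global} inequality $\|A,V\|\ge|G|$; its minimality argument automatically locates a nonempty induced subgraph $H$ on which the full Hall condition holds, so $H$ is online $d_H$-choosable (indeed $d_0$-KP). Third, Lemma~\ref{d0subgraphKP} grows $H$ back to all of $G$. Thus the paper trades your structural case analysis on $2$-connected blocks for the one-line bound $\mic(G)\ge|G|$, letting the Main Lemma absorb the Hall verification into a minimal-counterexample argument; completing your route would essentially reprove the Main Lemma in this special case.
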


In Section 6, we give another proof of Lemma~\ref{OnlineDegreeChoosable} using our Main Lemma. We need Gallai's result for OC-irreducible graphs.

\begin{lem}\label{GallaiForestOC}
	If $G$ is OC-irreducible, then $\L(G)$ is a Gallai forest.
\end{lem}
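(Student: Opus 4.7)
The plan is a one-step reduction using Lemma~\ref{OnlineDegreeChoosable}. Arguing by contradiction, suppose $\L(G)$ is not a Gallai forest. Then some connected component $H$ of $\L(G)$ fails to be a Gallai tree, and $H$ is nonempty. Because $\L(G)$ is the subgraph of $G$ induced on vertices of degree $\delta(G)$, and $H$ is an induced subgraph of $\L(G)$, the subgraph $H$ is also induced in $G$. So $H$ is a legitimate candidate subgraph for the OC-reducibility test.

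The second step is to verify that the list-size function $f_H$ used in the definition of OC-reducibility simplifies to the degree function $d_H$. By definition, $f_H(v) \DefinedAs \delta(G) + d_H(v) - d_G(v)$ for $v \in V(H)$. But every vertex of $H$ lies in $\L(G)$, so $d_G(v) = \delta(G)$, and hence $f_H(v) = d_H(v)$. Since $H$ is connected and not a Gallai tree, Lemma~\ref{OnlineDegreeChoosable} gives that $H$ is online $d_H$-choosable, i.e.\ online $f_H$-choosable. This contradicts the OC-irreducibility of $G$, so every component of $\L(G)$ must be a Gallai tree.

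There is no real obstacle here: the lemma is essentially a dictionary check, with Lemma~\ref{OnlineDegreeChoosable} doing all the combinatorial work. The only point that needs care is confirming that a component of $\L(G)$ is induced in $G$, which is immediate from the fact that $\L(G)$ is itself induced, and that the shift $d_H - d_G$ vanishes precisely on $\L(G)$, which is exactly why the degree-choosability criterion of Lemma~\ref{OnlineDegreeChoosable} can be applied without modification.
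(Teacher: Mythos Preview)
Your proof is correct and follows essentially the same approach as the paper: take a component $H$ of $\L(G)$, observe that $f_H = d_H$ because $d_G(v) = \delta(G)$ on $V(H)$, and invoke Lemma~\ref{OnlineDegreeChoosable}. The only cosmetic difference is that you frame it as a contradiction while the paper argues directly that each component must be a Gallai tree.
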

\begin{proof}
	Let $G$ be an OC-irreducible graph and $H$ a component of $\L(G)$.  Since $G$ is OC-irreducible, 
	$H$ is not online $f_H$-choosable where $f_H(v) \DefinedAs \delta(G) + d_H(v) - d_G(v)$ for all $v \in V(H)$.  
	But $d_G(v) = \delta(G)$ for $v \in V(H)$, so $f_H(v) = d_H(v)$ for all $v \in V(H)$. By Lemma~\ref{OnlineDegreeChoosable}, $H$ is a Gallai tree.
\end{proof}

\begin{lem}\label{GallaiTreeCount} Fix $k\ge6$. Let $G$ be a Gallai forest with
maximum degree at most $k-1$ not containing $K_{k}$. We have the following inequality:

\[
(k-1)\beta_{k-1}(G)+\sum_{v\in V(G)}k-1-d(v)\ge\frac{2(k-3)}{k-2}\card{G}-\frac{(k-1)(k-4)}{k-2}c(G).
\]
\end{lem}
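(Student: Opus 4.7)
My plan is to proceed by induction on the number of blocks of $G$. Since both sides of the inequality are additive over connected components, I may assume $G$ is a connected Gallai tree. I rewrite the goal as $F(G) \ge 0$, where
\[
F(G) \DefinedAs (k-1)\beta_{k-1}(G) + \sum_{v \in V(G)}(k-1-d(v)) - \tfrac{2(k-3)}{k-2}\card{G} + \tfrac{(k-1)(k-4)}{k-2}c(G),
\]
and decompose it using the block-cut structure as $F(G) = (k-1)\beta_{k-1}(G) + \sum_{B}T_B + 2(k-3)c(G)$, where $T_B \DefinedAs (v(B)-1)\tfrac{k^2-5k+8}{k-2} - 2\size{B}$. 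A direct computation shows $T_B \ge 0$ for every block type other than $B \in \{K_{k-2},K_{k-1}\}$, with $T_{K_{k-2}} = -\tfrac{(k-3)(k-4)}{k-2}$ and $T_{K_{k-1}} = -2(k-3)$.

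For the base case, $G$ is a single Gallai block: either $K_s$ with $2 \le s \le k-1$ (as $K_k \not\subseteq G$) or an odd cycle $C_s$ with odd $s \ge 3$. Since $k \ge 6$, no vertex of such a block has degree $k-1$, so $\beta_{k-1}(G)=0$, and the inequality reduces to a one-parameter numerical check in $s$, with $G=K_{k-1}$ being the tight case (both sides equal $k-1$). For the inductive step, select an end-block $B$ with unique cut vertex $v$ and set $G' \DefinedAs G - (V(B)\sm\{v\})$. Every vertex of $V(B)\sm\{v\}$ has $G$-degree strictly less than $k-1$ (clique blocks of order at most $k-1$ contribute internal degree at most $k-2$; cycle blocks contribute $2$), so $V_{k-1}(G) \subseteq V_{k-1}(G') \cup \{v\}$ and in particular $\beta_{k-1}(G) \ge \beta_{k-1}(G')$. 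Thus $F(G) - F(G') = (k-1)\bigl(\beta_{k-1}(G) - \beta_{k-1}(G')\bigr) + T_B$, and whenever $B$ is a cycle or a clique $K_s$ with $s \le k-3$ the step closes at once since $T_B \ge 0$.

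The main obstacle is the case $B = K_{k-1}$ (the case $B = K_{k-2}$ is analogous but milder). Here $v$ has degree $k-2$ inside $B$ and, being a cut vertex, at least one edge outside $B$; the bound $\Delta(G) \le k-1$ then forces $d_G(v) = k-1$, so $v \in V_{k-1}(G)$ with a unique external neighbor $u$. A case split on whether $u \in V_{k-1}(G)$ and whether some maximum independent set of $G'[V_{k-1}(G')]$ avoids $u$ gives $\beta_{k-1}(G) - \beta_{k-1}(G') \in \{0,1\}$, so $F(G) - F(G') \in \{5-k,\,-2(k-3)\}$, both potentially negative, and naive induction breaks.

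To resolve this I plan to strengthen the inductive hypothesis to $F(G) \ge \lambda(G)$ for a suitable non-negative bookkeeping quantity $\lambda$ that records the surplus available at degree-$<(k-1)$ vertices that could serve as external anchors for future $K_{k-1}$ (or $K_{k-2}$) end-blocks, arranged so that (i) the worst-case deficit $2(k-3)$ or $\tfrac{(k-3)(k-4)}{k-2}$ incurred by removing a heavy end-block is exactly absorbed by a matching drop in $\lambda$, and (ii) removing a cycle or small-clique end-block contributes enough positive $T_B$ to replenish $\lambda$. Verifying that such a $\lambda$ can be defined uniformly and maintained through each type of end-block removal is where I expect the longest but most routine calculations; the key structural facts it exploits are that every degree-$(k-1)$ vertex in our setting is a cut vertex, and that every $K_{k-1}$ block attaches to the rest of $G$ only through $K_2$ blocks at its cut vertices.
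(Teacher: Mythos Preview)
Your setup is correct, your block decomposition $F(G)=(k-1)\beta_{k-1}(G)+\sum_B T_B+2(k-3)c(G)$ is valid, and you correctly identify that keeping the cutvertex $v$ in $G'$ leaves you with $F(G)-F(G')=(k-1)\bigl(\beta_{k-1}(G)-\beta_{k-1}(G')\bigr)+T_B$, which can be negative for $B\in\{K_{k-2},K_{k-1}\}$. The gap is that your proposed remedy---strengthening the hypothesis to $F(G)\ge\lambda(G)$ for some unspecified bookkeeping function---is never actually defined or verified; as written it is a hope, not a proof, and there is no indication such a $\lambda$ exists with the required properties.

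The paper avoids this entirely with a trick you missed: when the endblock $B$ is $K_{k-1}$ (or $K_{k-2}$ with $d_T(x)=k-1$), delete the \emph{whole} block, including the cutvertex $x$, setting $T'\DefinedAs T-B$ rather than $T-(B-x)$. The point is that every neighbor of $x$ in $T'$ then loses the edge to $x$, so it has degree at most $k-2$ in $T'$ and cannot lie in any independent set of $T'_{k-1}$. Since $d_T(x)=k-1$, adjoining $x$ to a maximum such set yields $\beta_{k-1}(T)\ge\beta_{k-1}(T')+1$ \emph{unconditionally}---the ambiguity in your $\{0,1\}$ disappears. With this guaranteed gain of $k-1$ on the left and the extra vertex counted on the right (now $|B|$ vertices are removed instead of $|B|-1$), the arithmetic closes: for $B=K_{k-1}$ the left increases by $2k-4$ against $\frac{2(k-3)(k-1)}{k-2}$ on the right, and $(k-2)^2\ge(k-1)(k-3)$ does the job. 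The remaining case $B=K_{k-2}$ with $d_T(x)=k-2$ is handled by the same deletion $T'=T-B$ without needing the $\beta_{k-1}$ gain. No strengthened invariant is required.
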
 \begin{proof} It will suffice to prove that for any Gallai tree $T$
with maximum degree at most $k-1$ we have: 
\[
(k-1)\beta_{k-1}(T)+\sum_{v\in V(T)}\parens{k-1-d(v)}\ge\frac{2(k-3)}{k-2}\card{T}-\frac{(k-1)(k-4)}{k-2}.
\]

Suppose not and choose a counterexample $T$ minimizing $\card{T}$. First, if $T$
has only one block it is easy to see that the inequality is satisfied. Let $B$ be
an endblock of $T$ and say $x$ is the cutvertex in $B$. Suppose $\chi(B)\leq k-3$.
Put $T^*\DefinedAs T-(B-x)$. By minimality of $\card{T}$, $T^*$ satisfies the inequality.
Adding $B-x$ back increases the left side  by $(k-\chi(B))\card{B-x}-\size{x,B-x}\ge2(\card{B}-1)$,
but only increases the right side by $\frac{2(k-3)}{k-2}(\card{B}-1)$, so
$T$ is not a counterexample, a contradiction. Thus $B$ is either $K_{k-2}$ or $K_{k-1}$.

Consider $T'\DefinedAs T-B$. First suppose
$d_{T}(x)=k-1$. Note that none of $x$'s neighbors in $T'$ have degree $k-1$ in
$T'$ and thus are in no maximum independent set of degree $k-1$ vertices in $T'$.
Therefore, we can add $x$ to any such independent set, giving $\beta_{k-1}(T)>\beta_{k-1}(T')$.
Hence, after applying minimality to $T'$, we see that adding back $B$ increases
the left side by $k-1+\card{B-x}-\size{x,T'}=2k-4$ if $B$ is $K_{k-1}$ and by $k-1+2\card{B-x}-\size{x,T'}=3k-9$ if $B$
is $K_{k-2}$. Since the right side increases by only $\frac{2(k-3)}{k-2}\card{B}$
in both cases, $T$ satisfies the inequality, a contradiction.

Otherwise, $d_{T}(x)\le k-2$. So $B$ is $K_{k-2}$ and $d_{T}(x)=k-2$.  Now  adding 
$B$ back, increases the left side  by $2(k-3)+1-1$ and increases the right side 
by only $2(k-3)$, so again $T$ satisfies the inequality, a contradiction. \end{proof}

\begin{lem}\label{OrePrecursor3} If $G$ is an OC-irreducible graph such that $\HH(G)$
is edgeless, $\Delta(G)=\delta(G)+1\ge7$ and $K_{\Delta(G)}\not\subseteq G$, then
$\card{\HH(G)}<\frac{\delta(G)(\delta(G)-3)}{(\delta(G)-1)(\delta(G)-5)}c(\L(G))$.
\end{lem}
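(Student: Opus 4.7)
The plan is to strengthen the $\mic$-bound from Theorem~\ref{ConsantListMicStrength} by applying it not to $V(\HH(G))$ alone (which is already independent, since $\HH(G)$ is edgeless) but to $V(\HH(G))\cup I$, where $I$ is a maximum independent set of low vertices having no high neighbor. The resulting lower bound on $\mic(G)$ is then played off against $\mic(G)<\card{\HH(G)}+\card{G}$ from Lemma~\ref{OrePrecursor1}(1), and Lemma~\ref{GallaiTreeCount} applied to $\L(G)$ supplies the required lower bound on $\card{I}$.

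Concretely, I let $M\DefinedAs V(\L(G))\sm N_G(V(\HH(G)))$ denote the set of low vertices with no high neighbor. For any independent $I\subseteq M$, the set $V(\HH(G))\cup I$ is independent in $G$, so taking $I$ maximum yields
\[\mic(G)\geq (\delta+1)\card{\HH(G)}+\delta\,\alpha(G[M]).\]
Combining with $\mic(G)<\card{\HH(G)}+\card{G}=2\card{\HH(G)}+\card{\L(G)}$ from Lemma~\ref{OrePrecursor1}(1) gives
\[(\delta-1)\card{\HH(G)}+\delta\,\alpha(G[M])<\card{\L(G)}. \qquad (\star)\]

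Next, by Lemma~\ref{GallaiForestOC} and the hypotheses, $\L(G)$ is a Gallai forest with maximum degree at most $\delta$ and no $K_{\delta+1}$, and $\delta\ge 6$; so Lemma~\ref{GallaiTreeCount} applies to $\L(G)$ with $k=\delta+1$. Two observations make this bite: (i) a vertex of $\L(G)$ has degree $\delta$ in $\L(G)$ iff every $G$-neighbor of it lies in $\L(G)$, iff it belongs to $M$, so $\beta_\delta(\L(G))=\alpha(G[M])$; and (ii) $\sum_{v\in V(\L(G))}(\delta-d_{\L(G)}(v))=\size{V(\L(G)),V(\HH(G))}_{G}=(\delta+1)\card{\HH(G)}$, since $\HH(G)$ is edgeless. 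Lemma~\ref{GallaiTreeCount} then reads
\[\delta\,\alpha(G[M])+(\delta+1)\card{\HH(G)}\geq \frac{2(\delta-2)}{\delta-1}\card{\L(G)}-\frac{\delta(\delta-3)}{\delta-1}c(\L(G)).\]

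To finish, I substitute this lower bound on $\delta\,\alpha(G[M])$ into $(\star)$; the $\card{\HH(G)}$ contributions collapse to $-2\card{\HH(G)}$ and the $\card{\L(G)}$ contributions collapse to $\frac{\delta-3}{\delta-1}\card{\L(G)}$. Applying $\card{\L(G)}>(\delta-1)\card{\HH(G)}$ from Lemma~\ref{OrePrecursor1}(2) converts the result to $(\delta-5)\card{\HH(G)}<\frac{\delta(\delta-3)}{\delta-1}c(\L(G))$, which is the claimed bound after dividing by $\delta-5>0$. The main obstacle is noticing the very first move, foreshadowed in the paragraph preceding Lemma~\ref{GallaiTreeCount}: that low vertices with no high neighbor may be appended to the naturally-independent set $V(\HH(G))$. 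After that, Lemma~\ref{GallaiTreeCount} is set up precisely so that $\alpha(G[M])$ appears as $\beta_\delta(\L(G))$, and the remainder is algebra.
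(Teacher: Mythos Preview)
Your argument is correct and follows essentially the same approach as the paper: form the independent set $V(\HH(G))\cup I$ with $I$ a maximum independent set of degree-$\delta$ vertices of $\L(G)$, combine the resulting lower bound on $\mic(G)$ with the upper bound from Lemma~\ref{OrePrecursor1}(1), feed in Lemma~\ref{GallaiTreeCount}, and finish with Lemma~\ref{OrePrecursor1}(2). The only cosmetic difference is that the paper applies $(\delta-1)\card{\HH(G)}<\card{\L(G)}$ at the outset as well (to rewrite the upper bound $\mic(G)<\card{\L(G)}+2\card{\HH(G)}$ as $\mic(G)<\frac{\delta+1}{\delta-1}\card{\L(G)}$) before combining with the lower bound, whereas you keep the $\card{\HH(G)}$ term and invoke Lemma~\ref{OrePrecursor1}(2) only once at the end; the algebra comes out the same.
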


 \begin{proof} Put $\delta\DefinedAs\delta(G)$. By Lemma~\ref{OrePrecursor1}.1
we have
 \[\mic(G)<\card{\L(G)}+2\card{\HH(G)}<\card{\L(G)}+\frac{2}{\delta-1}\card{\L(G)}=\frac{\delta+1}{\delta-1}\card{\L(G)}.\]
 
 By Lemma~\ref{GallaiForestOC}, $\L(G)$ is a Gallai forest. 
 Pick $I\subseteq V_{k-1}$ with 
$\card{I}=\beta_{k-1}(G)$, and set $J=I\cup \HH(G)$.  
 As  $J$ is independent, applying Lemma \ref{GallaiTreeCount} to the Gallai forest $\L(G)$ gives
 \begin{align*}
 \mic(G)& \ge\size{J,V(G)} \ge\delta\beta_{k-1}(\L(G))+\sum_{v\in V(\L(G))}\delta-d_{\L(G)}(v)\\
 & \ge2\frac{\delta-2}{\delta-1}\card{\L(G)}-\frac{\delta(\delta-3)}{\delta-1}c(\L(G)).
 \end{align*}
 By Lemma~\ref{OrePrecursor1}.2, $\card{\L(G)}>(\delta-1)\card{\HH(G)}$. Combining this with the inequalities above proves the
lemma. \end{proof}

\begin{lem}\label{EdgelessEuler} Every OC-irreducible graph $G$ with $\delta(G)+1=\Delta(G)\ge10$
such that $\HH(G)$ is edgeless contains $K_{\Delta(G)}$. \end{lem}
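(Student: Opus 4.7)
The plan is to chain Lemma~\ref{OrePrecursor2}, Corollary~\ref{SigmaCorollary}, and (only for one boundary case) Lemma~\ref{OrePrecursor3} to contradict the assumption that $K_{\Delta(G)} \not\subseteq G$. Set $\delta \DefinedAs \delta(G)$, so $\delta \ge 9$. Suppose for contradiction that $K_{\Delta(G)} \not\subseteq G$. Then $\omega(G) \le \Delta(G) - 1 = \delta$, and together with $\delta \ge 9 \ge 6$ this secures the hypotheses of all three results. Two easy observations are used throughout: $\card{\HH(G)} > 0$ since $\Delta(G) > \delta$, and $c(\L(G)) \ge 1$ since $\L(G) = \emptyset$ would force $V(G) = V(\HH(G))$, which is edgeless, contradicting $\Delta(G) \ge 10$.

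Eliminating $\sigma(G)$ between Lemma~\ref{OrePrecursor2} and Corollary~\ref{SigmaCorollary} yields
\[
2(1 - \alpha_{\delta+1})\, c(\L(G)) < B(\delta)\, \card{\HH(G)},
\]
where $B(\delta) \DefinedAs \parens{4 - \tfrac{2}{\delta}} - (\delta - 2)\alpha_{\delta+1}$. Substituting $\alpha_{\delta+1} = \tfrac{1}{2} - \tfrac{1}{\delta(\delta - 1)}$ and simplifying collapses $B$ to the pleasant form $B(\delta) = 5 - \tfrac{\delta}{2} - \tfrac{1}{\delta - 1}$. For $\delta \ge 10$ both $5 - \tfrac{\delta}{2} \le 0$ and $-\tfrac{1}{\delta - 1} < 0$, so $B(\delta) < 0$; the displayed inequality is then immediately impossible, since its left side is positive while its right side is non-positive.

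The residual case is $\delta = 9$, where $B(9) = \tfrac{3}{8} > 0$ and the displayed bound does not close by itself. Here I would invoke Lemma~\ref{OrePrecursor3} to substitute $\card{\HH(G)} < \tfrac{9 \cdot 6}{8 \cdot 4}\, c(\L(G)) = \tfrac{27}{16}\, c(\L(G))$; dividing the resulting inequality by $c(\L(G)) \ge 1$ reduces the whole problem to the numerical claim $2(1 - \alpha_{10}) < \tfrac{3}{8} \cdot \tfrac{27}{16}$, i.e.\ $\tfrac{37}{36} < \tfrac{81}{128}$, which is patently false. Contradiction.

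The main obstacle is precisely this boundary case $\delta = 9$: the algebraic bracket $B(\delta)$ flips sign right at the threshold, so the clean two-step chain that handles $\delta \ge 10$ does not by itself cover the full range $\delta \ge 9$, and one genuinely needs the sharper Gallai-forest estimate of Lemma~\ref{OrePrecursor3} to close the gap. Everything else is routine algebra; the only delicate point is verifying that $c(\L(G)) > 0$ so the final division is legal, which is why the edgeless-$\HH(G)$ hypothesis is pulled in at the very start.
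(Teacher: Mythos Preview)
Your proof is correct and follows essentially the same approach as the paper: combine Lemma~\ref{OrePrecursor2}, Corollary~\ref{SigmaCorollary}, and Lemma~\ref{OrePrecursor3} to derive a numerical inequality that fails for $\delta\ge 9$. The paper applies all three results uniformly and reduces to a single inequality shown to force $\delta\le 8$, whereas you split off the case $\delta\ge 10$ (where Lemma~\ref{OrePrecursor3} is unnecessary because $B(\delta)<0$) and only invoke Lemma~\ref{OrePrecursor3} at $\delta=9$; this mirrors the paper's own remark preceding Lemma~\ref{OrePrecursor3} that the two-lemma chain handles $\Delta\ge 11$ and the third lemma is needed precisely to push down to $\Delta=10$.
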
 \begin{proof}
Suppose not and let $G$ be a counterexample. Put $\delta\DefinedAs\delta(G)$. Then
\begin{align*}
\parens{4-\frac{2}{\delta}}\card{\HH(G)}& >\sigma(G) & \textrm{(Lemma~\ref{OrePrecursor2})}\\ & \ge 
(\delta-2)\alpha_{\delta+1}\card{\HH(G)}+2(1-\alpha_{\delta+1})c(\L(G)) & \textrm{(Corollary \ref{SigmaCorollary})}\\ & \ge
\card{\HH(G)}\parens{(\delta-2)\alpha_{\delta+1}+2(1-\alpha_{\delta+1})\frac{(\delta-1)(\delta-5)}{\delta(\delta-3)}} &\textrm{(Lemma \ref{OrePrecursor3})} \\ 
 4-\frac{2}{\delta}& >\alpha_{\delta+1}(\delta-2)+2(1-\alpha_{\delta+1})\frac{(\delta-1)(\delta-5)}{\delta(\delta-3)}. 
\end{align*}
Thus $\delta\leq8$, a contradiction. \end{proof}

The following lemma from \cite{schauz2009mr} allows us to infer online list colorability of the whole from
online list colorability of parts.

\begin{lem}\label{CutLemma} Let $G$ be a graph and $\func{f}{V(G)}{\IN}$. If
	$H$ is an induced subgraph of $G$ such that $G-H$ is online $\restr{f}{V(G-H)}$-choosable
	and $H$ is online $f_{H}$-choosable where $f_{H}(v)\DefinedAs f(v)+d_{H}(v)-d_{G}(v)$,
	then $G$ is online $f$-choosable. \end{lem}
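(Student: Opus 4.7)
The plan is to combine a winning strategy $\sigma_1$ for the online $\restr{f}{V(G-H)}$-game on $G-H$ with a winning strategy $\sigma_2$ for the online $f_{H}$-game on $H$ into a single winning strategy for the online $f$-game on $G$. I would induct on $\card{G}+\sum_{v\in V(G)}f(v)$, the base case $\card{G}=0$ being immediate. For the inductive step I must produce, for each Lister move $S\subseteq V(G)$, an independent $I\subseteq S$ such that $G-I$ is online $f'$-choosable, where $f'$ is the usual reduction: $f'(v)=f(v)-1$ on $S\setminus I$ and $f'(v)=f(v)$ elsewhere.

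First I would feed $S_1\DefinedAs S\cap V(G-H)$ to $\sigma_1$, obtaining an independent $I_1\subseteq S_1$ (automatically independent in $G$) such that $(G-H)-I_1$ is online $f_1'$-choosable for the appropriate reduction $f_1'$ of $\restr{f}{V(G-H)}$. Then I would feed $S_2\DefinedAs (S\cap V(H))\setminus N_G(I_1)$ to $\sigma_2$ to obtain an independent $I_2\subseteq S_2$ (independent in $H$, hence in $G$) with $H-I_2$ online $f_2'$-choosable, where $f_2'$ is the corresponding reduction of $f_H$. Set $I\DefinedAs I_1\cup I_2$; this is independent in $G$ precisely because $I_2$ was chosen to avoid $N_G(I_1)$.

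To close the induction I would apply the Cut Lemma, via the inductive hypothesis, to $G-I$ with induced subgraph $H-I_2$: the complement $(G-I)-(H-I_2)$ equals $(G-H)-I_1$, and this has the required online choosability by the choice of $I_1$ (one checks that $f_1'$ agrees with the restriction of $f'$). What remains is to verify that $H-I_2$ is online $(f')_{H-I_2}$-choosable, where $(f')_{H-I_2}(v)\DefinedAs f'(v)+d_{H-I_2}(v)-d_{G-I}(v)$. Since enlarging lists only helps, it is enough to show $(f')_{H-I_2}(v)\ge f_2'(v)$ for every $v\in V(H)\setminus I_2$, and the key identity driving that verification is $d_{H-I_2}(v)-d_{G-I}(v)=(d_H(v)-d_G(v))+\card{N_G(v)\cap I_1}$, valid because $H$ is induced in $G$.

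The main obstacle is this last bookkeeping step: a vertex $v\in V(H)$ that is adjacent to $I_1$ is deliberately withheld from the $H$-game in the current round, so its $f_H$-list is not decremented even though its $f$-list is. A short case split on whether $v\in S$ and whether $v\in N_G(I_1)$ should show that the gap $(f')_{H-I_2}(v)-f_2'(v)$ is either $0$ or $\card{N_G(v)\cap I_1}\ge 1$, which exactly absorbs the discrepancy and allows the inductive hypothesis to finish the argument.
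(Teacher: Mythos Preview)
The paper does not prove Lemma~\ref{CutLemma}; it is quoted from Schauz \cite{schauz2009mr} without proof. Your argument is correct and is essentially the standard one: run the two Painter strategies in parallel, withholding from the $H$-game those vertices of $H$ whose lists have already effectively been shortened by the removal of $I_1$-neighbours in $G-H$, and check that the saved degree exactly compensates for the undecremented list. The case analysis you outline does go through (in the case $v\in S\cap N_G(I_1)$ the gap is $\card{N_G(v)\cap I_1}-1\ge 0$, not $\card{N_G(v)\cap I_1}$, but the conclusion is the same). One small technicality worth noting: for the induction on $\card{G}+\sum_v f(v)$ to go through you should either restrict to nonempty $S$ (the only interesting case in the game) or observe separately that $f(v)\ge 1$ for all $v$, which follows since $f(v)\ge 1$ on $V(G-H)$ by hypothesis and $f(v)\ge f_H(v)\ge 1$ on $V(H)$.
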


\begin{thm}\label{EdgelessOnlineEuler} If $G$ is a graph with $\Delta(G)\ge10$
not containing $K_{\Delta(G)}$ such that $\HH(G)$ is edgeless, then $G$ is online
$(\Delta(G)-1)$-choosable. \end{thm}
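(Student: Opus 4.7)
The plan is to induct on $|G|$, taking $G$ to be a minimum counterexample: $G$ satisfies $\Delta(G)\ge 10$, $K_{\Delta(G)}\not\subseteq G$, and $\HH(G)$ edgeless, yet is not online $(\Delta(G)-1)$-choosable. Write $k\DefinedAs\Delta(G)-1\ge 9$ and $f\equiv k$. The goal is to show $G$ is OC-irreducible, at which point Lemma~\ref{EdgelessEuler} (in the case $\delta(G)+1=\Delta(G)$) produces the contradiction $K_{\Delta(G)}\subseteq G$. The regular case $\delta(G)=\Delta(G)$ falls out directly from OC-irreducibility: taking $H=G$ in the definition gives $f_G\equiv \Delta(G)$, so $G$ is not online $\Delta(G)$-choosable, and Lemma~\ref{OnlineDegreeChoosable} forces each component of $G$ to be a Gallai tree; however, a $\Delta(G)$-regular connected Gallai tree with $\Delta(G)\ge 10$ must be $K_{\Delta(G)+1}$ (each non-cutvertex in a clique block determines that block as $K_{\Delta(G)+1}$, and more than one block at a cutvertex forces degree $>\Delta(G)$), again contradicting $K_{\Delta(G)}\not\subseteq G$.

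To establish OC-irreducibility, I would suppose for contradiction that $G$ is OC-reducible to a nonempty induced subgraph $H$, so $H$ is online $f_H$-choosable with $f_H(v)=\delta(G)+d_H(v)-d_G(v)$. Since $\delta(G)\le k$, the larger lists of size $f(v)+d_H(v)-d_G(v)\ge f_H(v)$ on $H$ still suffice, so $H$ is online $(f+d_H-d_G)$-choosable, and by Lemma~\ref{CutLemma} it suffices to online $k$-color $G-H$. I would split on $\Delta'\DefinedAs\Delta(G-H)$. If $\Delta'\le k-1$, every list in $G-H$ strictly exceeds the corresponding degree and any acyclic orientation is kernel-perfect, so the Kernel Lemma yields the coloring. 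If $\Delta'=k$, then online Brooks (Lemma~\ref{OnlineDegreeChoosable} applied componentwise) yields online $k$-choosability because no component of $G-H$ is $K_{k+1}=K_{\Delta(G)}$ (forbidden by hypothesis) nor an odd cycle (degrees too small for $k\ge 9$). If $\Delta'=\Delta(G)$, then $G-H$ still satisfies $\Delta\ge 10$ and $K_{\Delta(G)}\not\subseteq G-H$, and induction on $|G|$ closes the case provided $\HH(G-H)$ is also edgeless.

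The main obstacle will be this last sub-case $\Delta(G-H)=\Delta(G)$: verifying $\HH(G-H)$ is edgeless is not automatic, since deleting $H$ may raise the minimum degree and promote previously-low vertices of $G$ into $\HH(G-H)$, potentially creating edges among newly high vertices. I plan to address this by choosing $H$ with $V(H)\subseteq V(\L(G))$ whenever possible --- exploiting the Gallai-forest structure of $\L(G)$ via Lemma~\ref{GallaiForestOC} --- so that $\delta(G-H)=\delta(G)$, giving $\HH(G-H)\subseteq \HH(G)$ which inherits edgelessness; failing that, I would strengthen the inductive statement to the slightly broader graph-class (for instance, graphs where no edge joins two vertices of degree exceeding some fixed $\delta$) that is manifestly preserved by the reduction, apply induction in that setting, then specialize back.
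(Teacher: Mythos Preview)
Your overall strategy matches the paper's---minimum counterexample, Lemma~\ref{EdgelessEuler}, then Lemma~\ref{CutLemma}---but you are missing the single observation that makes it clean: the paper first asserts that a minimum counterexample $G$ is online $f$-critical for $f\equiv\Delta(G)-1$, meaning every proper subgraph of $G$ is already online $(\Delta(G)-1)$-choosable. That one line yields $\delta(G)\ge\Delta(G)-1$ (so Lemma~\ref{EdgelessEuler} is applicable) and, once Lemma~\ref{EdgelessEuler} produces $H$, immediately gives that $G-H$ is online $(\Delta(G)-1)$-choosable---with no case split on $\Delta(G-H)$, no need for $\HH(G-H)$ to be edgeless, and hence no need for either of your proposed workarounds. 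Without criticality you have two genuine gaps: you never justify $\delta(G)\ge\Delta(G)-1$ (required by the hypothesis $\delta(G)+1=\Delta(G)$ of Lemma~\ref{EdgelessEuler}), and your ``main obstacle'' in the sub-case $\Delta(G-H)=\Delta(G)$ is unresolved, since $H$ is handed to you by OC-reducibility and cannot be forced into $V(\L(G))$, while the ``strengthened induction'' is left unspecified.

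The criticality claim is exactly the computation you were attempting on $G-H$, but carried out on a \emph{critical} subgraph, where it succeeds. If some proper $G'\subsetneq G$ were not online $(\Delta(G)-1)$-choosable, pass to a critical $G^*\subseteq G'$; then $\delta(G^*)\ge\Delta(G)-1$. Either $\Delta(G^*)=\Delta(G)-1$, in which case $G^*$ is $(\Delta(G)-1)$-regular and Lemma~\ref{BrooksOnline} forces $K_{\Delta(G)}\subseteq G^*$; or $\Delta(G^*)=\Delta(G)$, in which case $\HH(G^*)$ consists only of vertices of $G$-degree $\Delta(G)$, hence lies inside the independent set $\HH(G)$, and $G^*$ is a smaller counterexample. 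The point is that criticality pins $\delta(G^*)\ge\Delta(G)-1$, which controls $\HH(G^*)$; your $G-H$ carries no such lower bound on its minimum degree. (Your regular-case argument is separately circular---your OC-irreducibility proof invokes $\delta(G)\le k$, which fails precisely when $G$ is regular---though the paper's proof likewise tacitly takes $\delta(G)+1=\Delta(G)$ when invoking Lemma~\ref{EdgelessEuler}.)
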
 \begin{proof} Suppose not and choose a counterexample
$G$ minimizing $\card{G}$. Then $G$ is online $f$-critical where $f(v)\DefinedAs\Delta(G)-1$
for all $v\in V(G)$. Hence $\delta(G)\ge\Delta(G)-1$ and we may apply Lemma \ref{EdgelessEuler}
to get a nonempty induced subgraph $H$ of $G$ that is online $f_{H}$-choosable
where $f_{H}(v)\DefinedAs\Delta(G)-1+d_{H}(v)-d_{G}(v)$ for all $v\in V(H)$. But
then applying Lemma \ref{CutLemma} shows that $G$ is $(\Delta(G)-1)$-choosable,
a contradiction. \end{proof}

Combining Theorem \ref{EdgelessOnlineEuler} with the following version of Brooks'
Theorem for online list coloring (first proved in \cite{Hladky}) we get Theorem
\ref{OurListOnlineOre}.

\begin{lem}\label{BrooksOnline} Every graph with $\Delta\ge3$ not containing $K_{\Delta+1}$
is online $\Delta$-choosable. \end{lem}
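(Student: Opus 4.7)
The plan is to follow, in the online setting, the short derivation of Brooks' Theorem that appears immediately after the statement of the Main Lemma, substituting Lemma \ref{CutLemma} for the elementary ``color $G-H$ then extend to $H$'' step. Proceed by induction on $\card{G}$, with trivial base cases when $\card{G}\le\Delta$, where online $\Delta$-choosability holds by coloring vertices one at a time.

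For the inductive step, suppose $G$ is a counterexample of minimum order with $\Delta\DefinedAs\Delta(G)\ge 3$ and $K_{\Delta+1}\not\subseteq G$. First I would show that $G$ must be $\Delta$-regular. If some $v$ has $d_G(v)<\Delta$, then $H\DefinedAs\{v\}$ is online $f_H$-choosable with $f_H(v)=\Delta-d_G(v)\ge 1$, and $G-v$ is online $\Delta$-choosable: by induction if $\Delta(G-v)=\Delta\ge 3$ (since $K_{\Delta+1}\not\subseteq G-v$), and otherwise via the trivial bound that every graph $G'$ is online $(\Delta(G')+1)$-choosable (a one-line consequence of the Kernel Lemma applied to any acyclic orientation, since DAGs are kernel-perfect). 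Lemma \ref{CutLemma} then contradicts the choice of $G$.

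With $G$ now $\Delta$-regular, Brooks' Theorem for Independence Number supplies an independent $A\subseteq V(G)$ with $\card{A}\ge\card{G}/\Delta$, whence $\size{A,V(G)}=\card{A}\cdot\Delta\ge\card{G}$. Taking $f\equiv\Delta$ on $V(G)$, the hypotheses of the Main Lemma hold, since $f(v)\le d_G(v)+1$ and $\sum_v(d_G(v)+1-f(v))=\card{G}\le\size{A,V(G)}$, so it produces a nonempty induced subgraph $H\subseteq G$ that is online $f_H$-choosable with $f_H(v)=\Delta+d_H(v)-\Delta=d_H(v)$. Since $G$ is $\Delta$-regular yet not online $\Delta$-choosable, $H\ne G$; applying Lemma \ref{CutLemma} to the proper subgraph $G-H$ (again online $\Delta$-choosable by induction, using the same dichotomy as above) then delivers the desired contradiction.

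The only real obstacle is the bookkeeping when deletions cause the maximum degree to drop or introduce new copies of $K_{\Delta'+1}$ in the residual graph; both are handled uniformly by the trivial online $(\Delta+1)$-choosability bound. Aside from this, the argument is a direct translation of the paper's ordinary-coloring proof of Brooks' Theorem into the online list setting.
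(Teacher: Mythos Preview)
Your argument is correct and is precisely the derivation the paper sketches in the Introduction (the paragraph beginning ``To prove Brooks' Theorem, consider a minimal counterexample~$G$''), adapted to the online setting with Lemma~\ref{CutLemma} replacing the informal ``color $G-H$ then extend'' step. The paper itself does not give a separate proof of Lemma~\ref{BrooksOnline}; it simply cites \cite{Hladky}, while noting earlier that the Main Lemma together with Brooks' Theorem for Independence Number yields exactly the argument you have written out.
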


\begin{thm}\label{OurListOnlineOre} Every graph with $\theta\ge18$ and $\omega\leq\frac{\theta}{2}$
is online $\floor{\frac{\theta}{2}}$-choosable. \end{thm}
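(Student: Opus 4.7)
The plan is to pass to a subgraph-minimal counterexample $G'$ and invoke the heavy lifting already done in Theorem \ref{EdgelessOnlineEuler} together with Brooks' theorem for online list coloring (Lemma \ref{BrooksOnline}). Set $k \DefinedAs \floor{\theta/2}$, so that $k \ge 9$. Suppose some graph with $\theta \ge 18$ and $\omega \le k$ fails to be online $k$-choosable, and take a subgraph-minimal such $G'$; in the paper's terminology $G'$ is online $(k+1)$-list-critical, so $\delta(G') \ge k$. Every edge $uv$ of $G'$ satisfies $d_{G'}(u)+d_{G'}(v) \le \theta(G') \le \theta(G) \le 2k+1$, and combined with $\delta(G') \ge k$ this pins $\Delta(G') \in \set{k, k+1}$.

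I would then split on $\Delta(G')$. If $\Delta(G') = k$, then $G'$ is $k$-regular and $\omega(G') \le k$ gives $K_{k+1} \not\subseteq G'$; since $k \ge 9 \ge 3$, Lemma \ref{BrooksOnline} makes $G'$ online $k$-choosable, a contradiction. If instead $\Delta(G') = k+1 = \delta(G')+1$, then any two adjacent vertices of degree $k+1$ would yield an Ore-degree at least $2k+2 > \theta$, so the vertices of degree $k+1$ form an independent set and $\HH(G')$ is edgeless. The hypothesis $\omega(G') \le k$ gives $K_{\Delta(G')} = K_{k+1} \not\subseteq G'$, and $\Delta(G') = k+1 \ge 10$, so Theorem \ref{EdgelessOnlineEuler} applies and yields that $G'$ is online $(\Delta(G')-1)$-choosable, i.e., online $k$-choosable, again a contradiction.

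The substantive work has already been done inside Theorem \ref{EdgelessOnlineEuler}; what remains is really bookkeeping to verify that the hypothesis $\theta \ge 18$ translates into the threshold $\Delta(G') \ge 10$ demanded there, and that the two regimes $\Delta(G') = \delta(G')$ and $\Delta(G') = \delta(G')+1$ exhaust the minimum counterexample. The one step that requires care is the reduction $\delta(G') \ge k$ in the online setting: this follows by a short game-theoretic argument, where a vertex $v$ of degree less than $k$ can be absorbed by running the inductive strategy on $G'-v$ and greedily including $v$ whenever the adversary presents it and no neighbor of $v$ is being colored in that round, so that the list of $v$ is depleted at most $d_{G'}(v)$ times before every neighbor has been colored and $v$ can be safely added.
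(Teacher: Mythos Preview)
Your proof is correct and follows essentially the same route as the paper: pass to a minimum counterexample, use criticality to force $\delta\ge k$ and hence $\Delta\in\{k,k+1\}$, then dispatch the two cases via Lemma~\ref{BrooksOnline} and Theorem~\ref{EdgelessOnlineEuler} respectively. The only cosmetic differences are that the paper leaves the justifications for $\delta(G')\ge k$ and for $\HH(G')$ being edgeless implicit, and your hand-rolled game argument for $\delta(G')\ge k$ could be replaced by a one-line appeal to Lemma~\ref{CutLemma} with $H=\{v\}$.
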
 \begin{proof} Suppose
not and choose a counterexample $G$ minimizing $\card{G}$. Put $k\DefinedAs\floor{\frac{\theta(G)}{2}}$.
Then $G$ is online $f$-critical where $f(v)\DefinedAs k$ for all $v\in V(G)$.
Hence $\delta(G)\ge k$ and thus $\Delta(G)\leq k+1$. If $\Delta(G)=k$, then the
theorem follows from Lemma \ref{BrooksOnline}. Hence we must have $\Delta(G)=k+1$.
Therefore $\HH(G)$ is edgeless, $\Delta(G)\ge10$ and $\omega(G)\leq\Delta(G)-1$.
Applying Theorem \ref{EdgelessOnlineEuler} shows that $G$ is online $(\Delta(G)-1)$-choosable,
a contradiction. \end{proof}

The same result for list coloring is an immediate consequence.

\begin{thm}\label{OurListOre} Every graph with $\theta\ge18$ and $\omega\leq\frac{\theta}{2}$
is $\floor{\frac{\theta}{2}}$-choosable. \end{thm}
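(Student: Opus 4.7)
The plan is to deduce this theorem as an immediate corollary of Theorem~\ref{OurListOnlineOre}. Recall that early in the paper it was observed (with a short inductive argument using the definition of online $f$-choosability) that every online $f$-choosable graph is $f$-choosable: given any $f$-list assignment $L$, one picks a color $\alpha$ and the set $S$ of vertices whose list contains $\alpha$, uses online $f$-choosability to find an independent set $I\subseteq S$ such that $G-I$ is online $f'$-choosable, colors $I$ with $\alpha$, and finishes by induction on $|G|$.

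Given this, the plan reduces to a single sentence. Let $G$ be a graph with $\theta(G)\ge 18$ and $\omega(G)\le \theta(G)/2$. By Theorem~\ref{OurListOnlineOre}, $G$ is online $\floor{\theta(G)/2}$-choosable. By the observation above (applied with the constant function $f(v)\DefinedAs \floor{\theta(G)/2}$), $G$ is therefore $\floor{\theta(G)/2}$-choosable.

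There is no real obstacle here; the only thing to confirm is that the implication ``online $k$-choosable $\Rightarrow$ $k$-choosable'' indeed applies to a constant list-size function, which it does since it is stated for arbitrary $f$. All the heavy lifting — the Main Lemma, Theorem~\ref{ConsantListMicStrength}, the Gallai-forest analysis via Lemmas~\ref{GallaiForestOC} and \ref{GallaiTreeCount}, and the degree computations in Lemmas~\ref{EdgelessEuler}–\ref{EdgelessOnlineEuler} — has already been absorbed into Theorem~\ref{OurListOnlineOre}, so nothing further needs to be proved.
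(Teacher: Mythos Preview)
Your proof is correct and is exactly the approach the paper takes: the paper states Theorem~\ref{OurListOre} immediately after Theorem~\ref{OurListOnlineOre} with only the remark ``The same result for list coloring is an immediate consequence,'' relying on the observation that online $f$-choosability implies $f$-choosability.
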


\section{Ore Brooks for maximum degree four}

Kostochka and Yancey's bound \cite{kostochkayancey2012ore} shows that if $G$ is
$4$-critical, then $\size{G}\ge\ceil{\frac{5\card{G}-2}{3}}$. If we try to analyze
$4$-critical graphs with edgeless high vertex subgraphs by putting this lower bound
on the number of edges together with the results on orientations and list coloring
obtained in \cite{kostochkayancey2012ore}, the bounds miss each other. Using the
improved bound from Lemma \ref{OrePrecursor1} we get an exact bound on the number
of edges in such a graph.

\begin{lem}\label{EdgesIn4Critical} For a critical graph $G$ with $\Delta(G)\leq\chi(G)=4$
such that $\HH(G)$ is edgeless we have $\size{G}=\ceil{\frac{5\card{G}-2}{3}}$
and $\card{G}$ is not a multiple of $3$. \end{lem}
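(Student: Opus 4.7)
The plan is to sandwich $\|G\|$ between the Kostochka--Yancey lower bound $\|G\|\ge\lceil(5|G|-2)/3\rceil$ cited in the paragraph above and an upper bound extracted from Lemma~\ref{OrePrecursor1}. Since $G$ is $4$-critical we have $\delta(G)\ge 3$, and combined with $\Delta(G)\le 4$ this leaves $\delta(G),\Delta(G)\in\{3,4\}$. The substantive case is $\delta(G)=3$, $\Delta(G)=4$, so that $\HH(G)$ consists of the vertices of degree $4$ and is edgeless in the non-trivial sense; the regular case $\delta=\Delta=4$ leaves $\HH(G)$ empty and, if it needs separate treatment, can be compared directly against the Kostochka--Yancey bound.

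Before invoking Lemma~\ref{OrePrecursor1} I have to verify that $G$ is OC-irreducible. Suppose for contradiction that $G$ is OC-reducible to a nonempty induced subgraph $H$, so that $H$ is online $f_{H}$-choosable with $f_{H}(v)=3+d_{H}(v)-d_{G}(v)$. By criticality, $G-V(H)$ admits a proper $3$-coloring; each $v\in V(H)$ then sees at most $d_{G}(v)-d_{H}(v)$ distinct colors on its external neighbors, leaving at least $3-(d_{G}(v)-d_{H}(v))=f_{H}(v)$ colors available from $\{1,2,3\}$. Online $f_{H}$-choosability then extends the coloring to $H$, contradicting $\chi(G)=4$. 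So $G$ is OC-irreducible.

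Now Lemma~\ref{OrePrecursor1} applies with $\delta=3$, $\Delta=4$. The identity $2\|G\|=\delta|G|+|\HH(G)|$ becomes $2\|G\|=3|G|+|\HH(G)|$, and part~(1) of that lemma yields $3|\HH(G)|<|G|$, so $3|\HH(G)|\le|G|-1$ and hence $2\|G\|\le 3|G|+\lfloor(|G|-1)/3\rfloor$. A short residue-class check finishes the proof. If $|G|=3t$ the upper bound reads $\|G\|\le 5t-1$, which strictly violates the Kostochka--Yancey bound $\|G\|\ge\lceil(15t-2)/3\rceil=5t$, ruling out $|G|\equiv 0\pmod 3$. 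If $|G|=3t+1$ or $|G|=3t+2$, the upper bound and the Kostochka--Yancey lower bound coincide at $5t+1$ and $5t+3$ respectively, both equal to $\lceil(5|G|-2)/3\rceil$, forcing equality.

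The main obstacle I expect is the OC-irreducibility check, since it is where the chromatic criticality of $G$ has to be translated into an online list-choosability reduction; once that bridge is in place, Lemma~\ref{OrePrecursor1} does the structural work, and what remains is the routine modular-arithmetic verification outlined above.
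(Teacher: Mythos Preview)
Your proposal is correct and follows essentially the same approach as the paper: sandwich $\|G\|$ between the Kostochka--Yancey lower bound and the upper bound coming from Lemma~\ref{OrePrecursor1}, then finish by a residue check modulo $3$. The paper uses the form $2\|G\|<(3+\tfrac{1}{3})|G|$ from Lemma~\ref{OrePrecursor1}(2) directly, whereas you equivalently pass through $3|\HH(G)|<|G|$; you also make explicit the OC-irreducibility verification (via the standard extension-of-a-$3$-coloring argument), which the paper leaves implicit.
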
 \begin{proof} Since $G$ is
$4$-critical, applying Lemma \ref{OrePrecursor1}.1 gives $2\size{G}<\parens{3+\frac{1}{3}}\card{G}=\frac{10}{3}\card{G}$.
By Kostochka and Yancey's bound we have $\ceil{\frac{5\card{G}-2}{3}}\leq\size{G}<\frac{5}{3}\card{G}$.
Hence $\size{G}=\ceil{\frac{5\card{G}-2}{3}}$ and $\card{G}$ is not a multiple
of $3$. \end{proof}

It is easy to see that contracting a diamond in a critical graph $G$ with $\Delta(G)\leq\chi(G)=4$
such that $\HH(G)$ is edgeless gives another such graph. So, the following characterization
of these graphs is natural. Recently, Postle \cite{postle} proved this using an
extension of the potential method of Kostochka and Yancey.

\begin{thm}[Postle \cite{postle}]\label{ContractionConjecture} Every critical
graph $G$ with $\Delta(G)\leq\chi(G)=4$ such that $\HH(G)$ is edgeless, except
$K_{4}$, has an induced diamond. In particular, any such $G$ can be reduced to
$K_{4}$ by a sequence of diamond contractions. \end{thm}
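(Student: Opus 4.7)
The plan is to argue by contradiction, pushing $\mic(G)$ just above the tight bound given by Theorem~\ref{ConsantListMicStrength}. Suppose $G\ne K_4$ is a minimum counterexample: a critical graph with $\Delta(G)\le\chi(G)=4$, $\HH(G)$ edgeless, and no induced diamond. Criticality gives $\delta(G)=3$ and $G$ is OC-irreducible, so by Lemma~\ref{GallaiForestOC}, $\L(G)$ is a Gallai forest. A $K_4$-block of $\L(G)$ would yield an isolated $K_4$-component, forcing $G=K_4$, so all blocks of $\L(G)$ are $K_1,K_2,K_3$ or odd cycles. The relation $2\size{G}=3\card{G}+\card{\HH(G)}$ combined with Lemma~\ref{EdgesIn4Critical} determines $\card{\HH(G)}$ in terms of $\card{G}\bmod 3$, and substituting into Theorem~\ref{ConsantListMicStrength} yields the tight inequality $\mic(G)\le\card{G}+\card{\HH(G)}-1$. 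Since $\HH(G)$ is itself an independent set with $\size{\HH(G),V(G)}=4\card{\HH(G)}$, only a small slack separates $\HH(G)$ from saturating this bound.

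My strategy is to augment $\HH(G)$ by one or two vertices of $\L(G)$ disjoint from $N(\HH(G))$: a single additional vertex suffices when $\card{G}\equiv 2\pmod 3$, and two mutually non-adjacent additional vertices suffice when $\card{G}\equiv 1\pmod 3$. In each case the augmented independent set $A$ satisfies $\size{A,V(G)}>\card{G}+\card{\HH(G)}-1$, contradicting the mic bound above. It therefore suffices to produce the required low-degree vertices. Suppose instead that $\HH(G)$ dominates $\L(G)$; then the $4\card{\HH(G)}$ edges from $\HH(G)$ to $\L(G)$ must cover every vertex of $\L(G)$, and a pigeonhole argument together with the Gallai forest structure forces some $\HH$-vertex $w$ either to share two neighbors with another $\HH$-vertex via an $\L(G)$-edge, or to be adjacent to two vertices of a $K_3$-block of $\L(G)$. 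Both configurations produce an induced diamond in $G$, contradicting the hypothesis.

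The hardest step is this structural argument forcing either a free $\L$-vertex or an induced diamond, since the no-induced-diamond constraint interacts with the Gallai forest blocks in subtle ways and the edge-count slack is small. Postle~\cite{postle} handled exactly this extremal balance via the potential method, tracking a linear functional $\rho(H)=a\size{H}-b\card{H}$ on induced subgraphs; reproducing this within our framework would plausibly require a tailored discharging scheme on $\L(G)$ that accounts for triangle-blocks, odd-cycle blocks, and $K_2$-blocks uniformly. For the ``in particular'' claim, I would induct on $\card{G}$: given an induced diamond $D=\{a,b,c,d\}$ with non-edge $cd$, contract $cd$ to a single vertex $v$ to obtain $G'$. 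The graph $G'$ has smaller order, remains $4$-critical, and modulo overlap pathologies handled by choosing $D$ appropriately, satisfies $\Delta(G')\le 4$ with $\HH(G')$ edgeless; the induction hypothesis then reduces $G'$ to $K_4$, and hence so does $G$.
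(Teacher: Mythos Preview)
The paper does not prove this theorem. It is stated as a result of Postle~\cite{postle}, with the remark that Postle's argument uses an extension of the Kostochka--Yancey potential method; the paper only observes (without proof) that contracting a diamond in such a graph yields another graph of the same type, motivating the statement. So there is no ``paper's own proof'' to compare your attempt against.

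As for your proposal itself: the arithmetic setting up the target inequality $\mic(G)\le |G|+|\HH(G)|-1$ and the case split on $|G|\bmod 3$ is correct, and the idea of augmenting $\HH(G)$ by one or two low vertices outside $N(\HH(G))$ is natural. But the heart of the matter---showing that if no such low vertex exists then $G$ contains an induced diamond---is not carried out, and you say as much. The pigeonhole sketch you give is not enough: domination of $\L(G)$ by $\HH(G)$ together with the Gallai-forest block structure does not obviously force two $\HH$-vertices to share two adjacent $\L$-neighbours, nor force a single $\HH$-vertex into two vertices of a $K_3$-block. The edge counts leave real slack (the average number of $\HH$-neighbours per $\L$-vertex is below $2$), and odd-cycle blocks of arbitrary length in $\L(G)$ give room for the $\HH$-edges to spread out without creating a diamond. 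In the $|G|\equiv 1\pmod 3$ case you need \emph{two} non-adjacent free low vertices, which is strictly harder. This is precisely the extremal balance that Postle's potential-function argument is designed to control, and your proposal does not offer a substitute mechanism---it acknowledges that one would need ``a tailored discharging scheme,'' which is to say the proof is not there.

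For the ``in particular'' clause, your inductive step also needs care: you must check that contracting the non-edge of an induced diamond preserves $\Delta\le 4$, $4$-criticality, and edgelessness of $\HH$, and that the contracted graph is still simple. The paper asserts this is easy, but your hedging (``modulo overlap pathologies'') suggests you have not verified it; in particular one must argue that the two degree-$2$ vertices of the diamond lie in $\L(G)$ and have distinct outside neighbours.
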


\section{Online choosability of triangle-free graphs}

We write $\lg(x)$ for the base $2$ logarithm of $x$. We can get a reasonably good
lower bound on $\mic(G)$ for triangle-free graphs using a simple probabilistic technique
of Shearer and its modification by Alon (see \cite{alon2004probabilistic}). 

\begin{lem}\label{triangle-free-mic} Every triangle-free graph $G=(V,E)$ satisfies
$\mic(G)\ge\frac{1}{4}\sum_{v\in V}\lg(d(v))$. \end{lem}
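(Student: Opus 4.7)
Plan.

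The plan is a probabilistic argument modeled on Shearer's bound $\alpha(G)\ge n\log d/d$ for triangle-free graphs and its per-vertex weighting by Alon. I would exhibit a random independent set $I\subseteq V$ satisfying
\[
\Pr[v\in I]\;\ge\;\frac{\lg d(v)}{4\,d(v)}
\]
for every $v$ with $d(v)\ge 2$. Vertices with $d(v)\in\{0,1\}$ contribute nothing on either side (using $\lg 0:=0$ by convention, harmless since isolated vertices cannot lie in any set witnessing $\mic$). Linearity of expectation then gives
\[
E\Bigl[\sum_{v\in I}d(v)\Bigr]=\sum_{v\in V}d(v)\Pr[v\in I]\ge\tfrac14\sum_{v\in V}\lg d(v),
\]
and extracting a single realization of $I$ witnesses the claimed lower bound on $\mic(G)$.

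For the selection rule, I would sample iid continuous weights $w(v)$ on $[0,1]$ and combine them with personalized thresholds $p_v$ of order $\lg d(v)/d(v)$. A vertex $v$ is tentatively \emph{active} when $w(v)\le p_v$, and is finally included in $I$ when no active neighbor has smaller weight than $v$. The output is an independent set by construction. Triangle-freeness is essential here because $N(v)$ is an independent set in $G$, so the events ``$u$ active and $w(u)<w(v)$'' for distinct $u\in N(v)$ depend on pairwise disjoint random variables and are therefore mutually independent. This decoupling yields a clean product formula for the conditional survival probability given $w(v)=t$, namely $\prod_{u\in N(v)}(1-\min(p_u,t))$, and is the single place where the hypothesis enters the calculation.

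The main obstacle is pushing the analysis past Caro--Wei. A single-round, untthresholded random-IS construction only yields $\Pr[v\in I]=\Theta(1/d(v))$, producing $\mic(G)\ge n/2$, which is insufficient once $\lg d(v)\gg 4$. Recovering the logarithmic improvement requires the full Shearer/Alon analysis: one must retain that neighbors of much larger degree carry much smaller thresholds (so that $\min(p_u,t)$ is genuinely smaller than $t$ for them), and integrate the product formula over the conditioning weight $w(v)\in[0,p_v]$ using the decoupling described above. The final step is a one-variable calibration of the constants hidden in $p_v$, tuned to land on exactly the factor $\tfrac14$ advertised in the statement.
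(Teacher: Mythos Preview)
Your plan has a genuine gap: the weight-greedy construction you describe cannot deliver the pointwise bound $\Pr[v\in I]\ge \frac{\lg d(v)}{4\,d(v)}$, and indeed cannot even deliver the required bound in expectation on $\sum_{v\in I}d(v)$.

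Consider a $d$-regular triangle-free graph (say a $d$-regular bipartite graph). All thresholds are equal, $p_v=p$. Conditioning on $w(v)=t\le p$, the survival probability is $\prod_{u\in N(v)}(1-\min(p,t))=(1-t)^d$, so
\[
\Pr[v\in I]=\int_0^p (1-t)^d\,dt=\frac{1-(1-p)^{d+1}}{d+1}\le \frac{1}{d+1},
\]
no matter how $p$ is chosen. Thus $\Pr[v\in I]=O(1/d)$, not $\Omega(\lg d/d)$, and globally $E\bigl[\sum_{v\in I}d(v)\bigr]\le \frac{nd}{d+1}<n$, which falls short of the target $\frac{n\lg d}{4}$ already for $d\ge 16$. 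The thresholding idea helps precisely when neighbors of $v$ have \emph{much larger} degree than $v$, but it gives nothing in the near-regular case---which is exactly where the Shearer phenomenon lives. Your construction is, at bottom, a Caro--Wei device and cannot see the logarithmic gain.

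A second issue: triangle-freeness does not enter where you say. The events ``$w(u)\le p_u$ and $w(u)<w(v)$'' for distinct $u\in N(v)$ depend on distinct coordinates $w(u)$ regardless of whether $N(v)$ spans any edges; they are mutually independent given $w(v)$ in every graph. So your stated use of the hypothesis is vacuous, which is consistent with the fact that the construction yields nothing beyond Caro--Wei.

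The paper's argument is genuinely different. It takes $W$ uniform over \emph{all} independent sets of $G$ and does not try to bound $\Pr[v\in W]$ pointwise. Instead it introduces $X_v\DefinedAs d(v)$ if $v\in W$ and $X_v\DefinedAs \size{v,W}$ otherwise, so that $\size{W,V}=\frac12\sum_v X_v$, and proves $E[X_v]\ge\frac12\lg d(v)$ by conditioning on $S\DefinedAs W\setminus N[v]$. Triangle-freeness is used here: since $N(v)$ is independent, the extensions of $S$ are exactly $S\cup\{v\}$ and $S\cup X_0$ for $X_0\subseteq X\DefinedAs N(v)\setminus N(S)$, giving the closed form $E[X_v\mid S]=\frac{x\,2^{x-1}+d(v)}{2^x+1}$, from which the bound follows by a short calculation. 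The crucial difference from your plan is that the second branch of $X_v$ credits $v$ for edges into $W$ even when $v\notin W$; this double counting is what captures the logarithm.
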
%

\begin{proof} Let $W$ be a random independent set in $G$ chosen uniformly from
all independent sets in $G$. It suffices to show that $E(\left\Vert W,V\right\Vert )\geq\frac{1}{4}\sum_{v\in V}\lg(d(v))$.
For each $v\in V$ put 
\[
X_{v}\DefinedAs\begin{cases}
d(v) & \mbox{if \ensuremath{v\in W}}\\
\left\Vert v,W\right\Vert  & \mbox{if \ensuremath{v\notin W}}
\end{cases}.
\]
Then $\left\Vert W,V\right\Vert =\frac{1}{2}\sum_{v\in V}X_{v}$. By linearity of
expectation it suffices to prove 
\begin{equation}
E(X_{v})\ge\frac{1}{2}\lg(d(v)).\label{L1cl-eq}
\end{equation}

To prove \eqref{L1cl-eq}, let $H:=G[V\sm N[v]]$, fix an independent set $S$
in $H$, and set $X:=N(v)\sm N(S)$. Put $x\DefinedAs|X|$. It suffices to
prove that all such $S$ satisfy 
\begin{equation}
E\parens{X_{v}\mid W\cap V(H)=S}\ge\frac{\lg(d(v))}{2}.\label{L1cl2-eq}
\end{equation}

Suppose \eqref{L1cl2-eq} fails for $S$. As $G$ is triangle-free, the independent
sets $W$ with $W\cap V(H)=S$ are exactly $S\cup\{v\}$ and $S\cup X_{0}$ where
$X_{0}\subseteq X$. Thus 
\[
\frac{\lg(d(v))}{2}>E\parens{X_{v}\mid W\cap V(H)=S}=\frac{x2^{x-1}+d(v)}{2^{x}+1}.
\]
 So $2^{x}\lg(d(v))+\lg(d(v))>x2^{x}+2d(v)$. Putting $t\DefinedAs\lg(d(v))-x$ and
rearranging yields
\[
2^{x}t=2^{x}\parens{\lg(d(v))-x}>2d(v)-\lg(d(v))>d(v).
\]
Now we have the contradiction 
\[
\frac{t}{2^{t}}=\frac{2^{x}t}{d(v)}>1.\qedhere
\]
\end{proof}

\begin{thm}\label{triangle-free-chooooser} If $G$ is a triangle-free graph and
$\func{f}{V(G)}{\IN}$ by $f(v)\DefinedAs d_{G}(v)+1-\floor{\frac{1}{4}\lg(d_{G}(v))}$,
then $G$ has a nonempty induced subgraph $H$ that is online $f_{H}$-choosable
where $f_{H}(v)\DefinedAs f(v)+d_{H}(v)-d_{G}(v)$ for $v\in V(H)$. \end{thm}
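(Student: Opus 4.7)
The plan is a direct application of the Main Lemma: I will feed it the $f$ given in the statement together with an independent set supplied by Lemma~\ref{triangle-free-mic}. With this setup, $d_G(v)+1-f(v)=\floor{\frac{1}{4}\lg(d_G(v))}$, so the numerical hypothesis of the Main Lemma is exactly a lower bound on $\size{A,V}$ in terms of $\sum_v \floor{\frac{1}{4}\lg(d_G(v))}$, which is precisely what the triangle-free $\mic$ bound delivers.

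Concretely, I would first verify the hypothesis $f(v)\le d_G(v)+1$ of the Main Lemma. This is immediate because $\floor{\frac{1}{4}\lg(d_G(v))}\ge 0$ whenever $d_G(v)\ge 1$. Vertices of degree $0$ can be ignored without loss (by the convention $\lg(0):=0$ they contribute nothing to either side of the inequalities below, or one may restrict attention to a nonempty component), and since $f(v)=d_G(v)+1-\floor{\frac{1}{4}\lg(d_G(v))}$ is a nonnegative integer, the hypothesis of the Main Lemma holds.

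Next, Lemma~\ref{triangle-free-mic} supplies an independent set $A\subseteq V$ with $\size{A,V}\ge \tfrac{1}{4}\sum_{v\in V}\lg(d_G(v))$. Since $\floor{x}\le x$, summing termwise gives
\[
\sum_{v\in V}\bigl(d_G(v)+1-f(v)\bigr)\;=\;\sum_{v\in V}\floor{\tfrac{1}{4}\lg(d_G(v))}\;\le\;\tfrac{1}{4}\sum_{v\in V}\lg(d_G(v))\;\le\;\size{A,V}.
\]
Applying the Main Lemma to $A$ and $f$ then produces a nonempty induced subgraph $H$ of $G$ that is online $f_H$-choosable, where $f_H(v)=f(v)+d_H(v)-d_G(v)$ for $v\in V(H)$; this is exactly the conclusion. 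I anticipate no real obstacle: the whole argument is a one-line reduction combining the two previously proved lemmas, and the only care needed is the pedantic treatment of vertices with $d_G(v)\in\{0,1\}$, where $\lg(d_G(v))$ is undefined or zero.
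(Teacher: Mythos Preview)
Your proposal is correct and is essentially identical to the paper's own proof, which simply notes that $\sum_{v\in V(G)}\bigl(d_G(v)+1-f(v)\bigr)=\sum_{v\in V(G)}\floor{\tfrac14\lg(d_G(v))}\le\mic(G)$ and invokes the Main Lemma. You are actually a bit more careful than the paper in explicitly checking the hypothesis $f(v)\le d_G(v)+1$ and discussing the degenerate case $d_G(v)=0$.
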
 \begin{proof}
Immediate upon applying Main Lemma to $G$ since 
\[
\sum_{v\in V(G)}d_{G}(v)+1-f(v)=\sum_{v\in V(G)}\floor{\frac{1}{4}\lg(d_{G}(v))}\le\mic(G).
\]
\end{proof}

\begin{cor}\label{tricolor} If $G$ is a triangle-free graph with $\Delta(G)\le t$
for some $t\in\IN$, then $G$ is online $\parens{t+1-\floor{\frac{1}{4}\lg(t)}}$-choosable.
\end{cor}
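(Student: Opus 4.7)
The plan is to proceed by induction on $\card{G}$, peeling off the online-colorable induced subgraph produced by Theorem~\ref{triangle-free-chooooser} at each step. The base case $\card{G} = 0$ is trivial, so assume $\card{G} > 0$ and that the corollary holds for all smaller triangle-free graphs of maximum degree at most $t$.

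Let $g(v) \DefinedAs t + 1 - \floor{\frac{1}{4}\lg(t)}$ be the constant target function, and let $f(v) \DefinedAs d_G(v) + 1 - \floor{\frac{1}{4}\lg(d_G(v))}$ be the function appearing in Theorem~\ref{triangle-free-chooooser} (handling any isolated vertices separately, since they trivially need lists of size $1$). The key elementary observation is that $x \mapsto x - \floor{\frac{1}{4}\lg(x)}$ is non-decreasing on the positive integers, so $d_G(v) \le t$ gives $f(v) \le g(v)$ for every vertex $v$.

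Applying Theorem~\ref{triangle-free-chooooser} produces a nonempty induced subgraph $H$ of $G$ that is online $f_H$-choosable, where $f_H(v) \DefinedAs f(v) + d_H(v) - d_G(v)$. Set $g_H(v) \DefinedAs g(v) + d_H(v) - d_G(v)$; since $f \le g$ pointwise, also $f_H \le g_H$ pointwise, so $H$ is online $g_H$-choosable as well. If $H = G$, we are done. Otherwise $G - H$ is triangle-free with maximum degree at most $t$ and strictly fewer vertices, so the induction hypothesis gives that $G - H$ is online $\restr{g}{V(G-H)}$-choosable. Lemma~\ref{CutLemma} (with $g$ playing the role of $f$) then assembles these two pieces into an online $g$-coloring scheme for $G$, which is exactly the desired conclusion.

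There is no real obstacle here: the corollary is essentially a mechanical repackaging of Theorem~\ref{triangle-free-chooooser}. The only mild point to verify is the monotonicity statement $f \le g$, and the only bookkeeping issue is the convention at isolated vertices, which can be dispatched at the outset.
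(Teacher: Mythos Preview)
Your argument is correct and matches the paper's proof essentially line for line: both apply Theorem~\ref{triangle-free-chooooser} to extract a nonempty $H$, use the monotonicity of $x\mapsto x-\floor{\tfrac14\lg x}$ to pass from $f_H$ to $g_H$, and then combine with the smaller graph $G-H$ via Lemma~\ref{CutLemma}. The only cosmetic difference is that you phrase it as a direct induction on $\card{G}$ while the paper uses a minimal counterexample.
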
 \begin{proof} Suppose not and choose a counterexample $G$ and $t\in\IN$
so as to minimize $\card{G}$. Put $f(v)\DefinedAs d_{G}(v)+1-\floor{\frac{1}{4}\lg(d_{G}(v))}$.
By Theorem \ref{triangle-free-chooooser}, $G$ has a nonempty induced subgraph $H$
that is online $f_{H}$-choosable where $f_{H}(v)\DefinedAs f(v)+d_{H}(v)-d_{G}(v)$
for $v\in V(H)$. Since $t+1-\floor{\frac{1}{4}\lg(t)}\ge d_{G}(v)+1-\floor{\frac{1}{4}\lg(d_{G}(v))}$
for all $v\in V(G)$, we have that $H$ is $g(v)$-choosable where $g(v)\DefinedAs t+1-\floor{\frac{1}{4}\lg(t)}+d_{H}(v)-d_{G}(v)$.
Now applying minimality of $\card{G}$ and Lemma \ref{CutLemma} gives a contradiction.
\end{proof}

The best, known bounds for the chromatic number of triangle-free graphs are Kostochka's
upper bound of $\frac{2}{3}\Delta+2$ in \cite{kostochka1982modification} (see \cite{rabern2010destroying}
for a proof in English) for small $\Delta$ and Johansson's upper bound of $\frac{9\Delta}{\ln(\Delta)}$
for large $\Delta$. Johansson's proof also works for list coloring, but not for
online list coloring. To the best of our knowledge Corollary \ref{tricolor} is the
best, known-upper bound for online list colorings of triangle-free graphs. Additionally,
Corollary \ref{tricolor} improves on Johansson's bound for list coloring for $\Delta\le8000$.
The bound can surely be improved by a more complicated computation of $\mic(G)$,
but not beyond around $\Delta+1-\floor{2\ln(\Delta)}$ via this method as can be
seen by examples of triangle-free graphs with independence number near $\frac{2\ln(\Delta)}{\Delta}n$.

\section{Gallai Forests}

Recall that a Gallai forest is a graph such that each block is a clique or odd cycle.
In this section we add to the many characterizations of Gallai forests. %

\subsection{Graphs with minimum mic}

\begin{lem}\label{mmic}Let $G = (V,E)$ be a connected graph with a connected induced subgraph
	$H$. Then $\mic(G)\ge\mic(H)+|G-H|$. In particular, $\mic(G)\ge|G|-1$. \end{lem}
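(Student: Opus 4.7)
The plan is to proceed by induction on $k \DefinedAs |V(G)|-|V(H)|$. The base case $k=0$ is trivial, since then $G=H$ and there is nothing to prove.

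For the inductive step, the key observation is that when $V(H)\subsetneq V(G)$ and $G$ is connected, there exists $v\in V(G)\sm V(H)$ with at least one neighbor in $V(H)$. Setting $H'\DefinedAs G[V(H)\cup\set{v}]$, we get a connected induced subgraph of $G$ with $|V(G)|-|V(H')|=k-1$. By the inductive hypothesis applied to $G$ and $H'$,
\[
\mic(G)\ge\mic(H')+|G-H'|=\mic(H')+|G-H|-1,
\]
so it suffices to prove the single-step inequality $\mic(H')\ge\mic(H)+1$.

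To establish this, I would let $I$ be an optimal independent set in $H$, so that $\mic(H)=\sum_{w\in I}d_H(w)$. Since $H'$ differs from $H$ only by the addition of $v$ and its incident edges, for each $w\in V(H)$ we have $d_{H'}(w)=d_H(w)+[w\adj v]$. Thus
\[
\sum_{w\in I}d_{H'}(w)=\mic(H)+\card{N_{H'}(v)\cap I}.
\]
If $v$ has a neighbor in $I$, then $I$ itself (which is still independent in $H'$) already witnesses $\mic(H')\ge\mic(H)+1$. Otherwise $v$ has no neighbor in $I$, so $I\cup\set{v}$ is independent in $H'$, and since $v$ has a neighbor in $V(H)$ by choice, $d_{H'}(v)\ge 1$, giving $\sum_{w\in I\cup\set{v}}d_{H'}(w)\ge\mic(H)+1$. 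Either way $\mic(H')\ge\mic(H)+1$, completing the induction.

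For the ``in particular'' clause, I would apply the main statement with $H$ taken to be any single vertex of $G$ (which is a connected induced subgraph with $\mic(H)=0$), yielding $\mic(G)\ge|G|-1$. I do not anticipate any real obstacle here; the only subtlety is the two-case split for extending $I$, and both cases are short once one writes down how degrees change when a single vertex is added.
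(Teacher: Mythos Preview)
Your proof is correct and uses essentially the same idea as the paper's: induction on $|G-H|$, with the key case split on whether the extra vertex has a neighbor in the current optimal independent set. The only difference is organizational---you grow $H$ toward $G$ by adjoining a vertex $v$ with a neighbor in $V(H)$, whereas the paper shrinks $G$ toward $H$ by deleting a noncutvertex $v$ of $G/H$---but the two-case extension argument is identical in both.
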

\begin{proof}
	Argue by induction on $|G-H|$. Plainly, $|G-H| > 0$. Let $v\notin V(H)$ be a noncutvertex
	in $G/H$. By induction $\mic(G-v)\ge\mic(H)+|G-H-v|$. Let $A$ be an optimal set
	in $G-v$. Put $A'\DefinedAs A\cup\set{v}$ if $\size{v,A}=0$; else
	put $A'\DefinedAs A$. Then $A'$ is independent in $G$ and $\size{A',V} \ge\mic(H)+|G-H|$.
	Letting $|H|=1$ yields $\mic(G)\ge |G|-1$.
\end{proof}
\begin{lem} \label{mGt-lem}Every Gallai tree $G=(V,E)$ satisfies $\mic(G)=|G|-1$.
\end{lem}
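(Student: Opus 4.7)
The plan is to establish the matching upper bound $\mic(G)\le|G|-1$, since Lemma~\ref{mmic} already gives $\mic(G)\ge|G|-1$. I would prove this by induction on $|G|$. The base case $|G|=1$ is trivial. If $G$ consists of a single block, then $G$ is either a clique or an odd cycle. For $K_n$, any independent set contains at most one vertex and that vertex has degree $n-1$, so $\mic(K_n)=n-1$. For an odd cycle $C_{2k+1}$, every independent set has size at most $k$ and every vertex has degree $2$, so $\mic(C_{2k+1})\le 2k=|C_{2k+1}|-1$. This handles the single-block case with equality.

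For the inductive step with multiple blocks, I would pick an endblock $B$ with cutvertex $x$, and set $G'\DefinedAs G-(B-x)$. Then $G'$ is a Gallai tree on fewer vertices and $|G|=|G'|+|B|-1$. Given an arbitrary independent $I\subseteq V(G)$, decompose $I=I'\cup I''$ with $I'\DefinedAs I\cap V(G')$ and $I''\DefinedAs I\cap(V(B)\sm\{x\})$. The degrees split cleanly: $d_G(v)=d_{G'}(v)$ for $v\in V(G')\sm\{x\}$, $d_G(v)=d_B(v)$ for $v\in V(B)\sm\{x\}$, and $d_G(x)=d_{G'}(x)+d_B(x)$. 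Putting $J\DefinedAs I''$ if $x\notin I$ and $J\DefinedAs I''\cup\{x\}$ if $x\in I$, a direct computation gives
\[
\size{I,V(G)}_G=\size{I',V(G')}_{G'}+\size{J,V(B)}_B.
\]
In either case $J$ is independent in $B$, so applying the inductive hypothesis to $G'$ and the single-block bound to $B$ yields
\[
\size{I,V(G)}_G\le(|G'|-1)+(|B|-1)=|G|-1,
\]
finishing the induction.

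The main obstacle is simply the bookkeeping for the degree of $x$ when $x\in I$; once one sees that adjoining $x$ to $I''$ produces an independent set in $B$ and that this exactly accounts for the missing $d_B(x)$ term, the two cases collapse into the same bound. No new ideas about the structure of Gallai trees are needed beyond the easy single-block computation.
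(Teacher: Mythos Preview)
Your proof is correct and follows essentially the same approach as the paper: both argue by induction, pick an endblock $B$ with cutvertex $x$, split an optimal independent set into its $G'$-part and its $B$-part, observe that the degree sums decompose as $\size{I',V(G')}_{G'}+\size{J,V(B)}_B$, and apply the inductive hypothesis together with the single-block bound. The paper inducts on the number of blocks and handles $x$ by simply letting its two pieces overlap at $x$, while you induct on $\card{G}$ and treat the $x\in I$ case explicitly, but these are cosmetic differences.
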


\begin{proof}Argue by induction on the number of blocks. If $G$ has only one block
	then $G$ is a clique or an odd cycle, and the lemma holds. Else let $B$ be an endblock
	with cutvertex $x$. Then $G'\DefinedAs G-(B-x)$ is a Gallai tree with fewer blocks. Let
	$A=I\cup J$ be an optimal set in $G$, where $I\subseteq V(B)$ and $J\subseteq V(G')$.
	Lemma \ref{mmic} and induction yield the equality 
	\[
	|G|-1\leq\mic(G)=\size{A,V}_G = \size{I,V(B)}_B + \size{J,V(G')}_{G'} \le |B|+|G'|-2=|G|-1.\qedhere
	\]
\end{proof}

The next lemma has many different proofs \cite{erdos1979choosability,Entringer1985367,Hladky}.
Although it is known as Rubin's Block Lemma \cite{erdos1979choosability}, the lemma
was implicit in the much earlier work of Gallai \cite{gallai1963kritische} and Dirac.

\begin{lem}[Rubin's Block Lemma] If $G$ is a $2$-connected graph that is not complete
	and not an odd cycle, then $G$ contains an even cycle with at most one chord. \end{lem}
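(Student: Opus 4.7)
The plan is to follow the classical theta-graph argument via a shortest ear. First dispose of the trivial case: if $G$ is itself a cycle, then by hypothesis it is even and contains no chords, so we are done. Otherwise $G$ properly contains a cycle, and we take $C$ to be a shortest cycle of $G$; such a $C$ is automatically induced, since any chord would give a strictly shorter cycle. If $|C|$ is even we are done immediately, so we reduce to the case where $C$ is an odd induced cycle and $V(G)\sm V(C)\neq\emptyset$.

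Next use $2$-connectivity to produce an \emph{ear}: a path $P = c_{1} u_{1} \cdots u_{r} c_{2}$ with $c_{1},c_{2}\in V(C)$, $c_{1}\neq c_{2}$, and $u_{1},\ldots,u_{r}\in V(G)\sm V(C)$. (Such a $P$ exists because $2$-connectivity guarantees two internally disjoint paths from any vertex outside $C$ to $V(C)$, which concatenate into an ear.) Choose $P$ of minimum length. The payoff of minimality is three structural facts that I will read off directly: (a) $\{u_{1},\ldots,u_{r}\}$ induces a path in $G$; (b) no $u_{i}$ has a neighbor in $V(C)\sm\{c_{1},c_{2}\}$; and (c) $c_{1}u_{i}\in E(G)$ forces $i=1$ and $c_{2}u_{i}\in E(G)$ forces $i=r$. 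Any violation of (a), (b), or (c) would yield a strictly shorter ear.

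The arcs $A_{1}$ and $A_{2}$ of $C$ between $c_{1}$ and $c_{2}$ have lengths $\ell_{1}$ and $\ell_{2}$ with $\ell_{1}+\ell_{2}=|C|$ odd. Together with $P$ they form two cycles $C_{j}\DefinedAs A_{j}+P$ of lengths $\ell_{j}+r+1$; these have opposite parity, so one of them --- call it $C_{j}$ --- is even. It remains to check that the only possible chord of $C_{j}$ is the single edge $c_{1}c_{2}$. Each candidate chord lies in one of the following categories: inside $V(A_{j})$ (excluded because $C$ is induced); inside $\{u_{1},\ldots,u_{r}\}$ (excluded by (a)); between $V(A_{j})\sm\{c_{1},c_{2}\}$ and $\{u_{1},\ldots,u_{r}\}$ (excluded by (b)); between $c_{1}$ or $c_{2}$ and an interior $u_{i}$ (excluded by (c)); or the edge $c_{1}c_{2}$ itself. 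This exhibits an even cycle with at most one chord.

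The main obstacle I expect is the careful bookkeeping in the chord enumeration for $C_{j}$: one must separately inspect the five categories above and confirm that the minimality of $P$ really does dispatch all of them except $c_{1}c_{2}$. A minor degenerate case arises when one of the arcs has length $1$, so that $c_{1}c_{2}\in E(C)\subseteq E(C_{j})$ is already an edge of the cycle $C_{j}$ rather than a chord; this only makes the conclusion stronger (zero chords). No other subtleties should appear, so once the enumeration is in hand the proof closes.
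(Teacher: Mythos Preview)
The paper does not prove Rubin's Block Lemma; it only states it with references to earlier sources. Assessing your argument on its own: the theta-graph approach is standard and nearly correct, but claim (b) does \emph{not} follow from minimality of $P$ when $r=1$. If the shortest ear is $P=c_1u_1c_2$ and $u_1$ has a further neighbor $c_3\in V(C)\sm\{c_1,c_2\}$, then the competing ears $c_1u_1c_3$ and $c_3u_1c_2$ also have length $2$, so minimality is silent; in that situation $u_1c_3$ is an extra chord of your cycle $C_j$ and the enumeration in your last paragraph fails. (For $r\ge 2$ your derivation of (a)--(c) is fine: for every $1\le i\le r$ one of $c_1u_1\cdots u_ic$ or $cu_i\cdots u_rc_2$ has length at most $r<r+1$.)

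The bad case is very restricted. Since $C$ is a shortest cycle, a vertex outside $C$ with two neighbors on $C$ forces each arc between them to have length at least $|C|-2$; summing, $|C|\le 4$, and as $|C|$ is odd this gives $|C|=3$. Then $u_1$ adjacent to all of $C$ yields an induced $K_4$, so your argument is already complete whenever $K_4\not\subseteq G$. For the remaining case, run the same ear argument on a maximal clique $K$ with $|K|\ge 4$ instead of on $C$: take a shortest ear $k_1w_1\cdots w_sk_2$ on $K$ (which exists since $G$ is $2$-connected and not complete). If $s=1$, pick $k_0\in K$ non-adjacent to $w_1$ (such $k_0$ exists by maximality of $K$, and $k_0\ne k_1,k_2$); the $4$-cycle $w_1k_1k_0k_2$ is even with unique chord $k_1k_2$. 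If $s\ge 2$, your claims (a)--(c) hold verbatim for this ear, so $k_1w_1\cdots w_sk_2k_1$ is an induced $(s{+}2)$-cycle; if $s$ is odd, insert one more clique vertex $k_3$ between $k_2$ and $k_1$ to obtain an even $(s{+}3)$-cycle whose single chord is $k_1k_2$.
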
 

\begin{thm}\label{micBasics} A connected graph $G=(V,E)$ is a Gallai tree if and
	only if $\mic(G)=|G|-1$; otherwise $\mic(G)\ge|G|$. \end{thm}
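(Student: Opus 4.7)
The backward direction is Lemma~\ref{mGt-lem}, and Lemma~\ref{mmic} already gives $\mic(G)\ge|G|-1$ unconditionally, so the work reduces to proving the contrapositive of the forward direction: if $G$ is connected and not a Gallai tree then $\mic(G)\ge|G|$. The plan is to locate a small connected induced subgraph $H\subseteq G$ that by itself satisfies $\mic(H)\ge|H|$, and then lift this bound to $G$ via Lemma~\ref{mmic}, which will give $\mic(G)\ge\mic(H)+|G-H|\ge|G|$.

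Since $G$ is not a Gallai tree, I would first pick a block $B$ of $G$ that is neither a clique nor an odd cycle. A block on at most two vertices is a $K_2$, so $B$ has at least three vertices and is $2$-connected. Rubin's Block Lemma then hands me an even cycle $C\subseteq B$ whose vertex set carries at most one chord in $B$. Because blocks are themselves induced subgraphs of $G$, setting $H\DefinedAs G[V(C)]$ gives $H=B[V(C)]$, i.e.\ precisely the cycle $C$ together with zero or one additional edge; in particular $H$ is connected and so eligible as the subgraph in Lemma~\ref{mmic}.

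The heart of the plan is the claim $\mic(H)\ge|H|$. I would prove it by a parity argument on the cycle $C=v_1v_2\cdots v_nv_1$ (with $n$ even). The two canonical maximum independent sets $\{v_1,v_3,\dots,v_{n-1}\}$ and $\{v_2,v_4,\dots,v_n\}$ each have size $n/2$ and each witnesses degree sum $n$ in $C$. A chord $v_iv_j$ of $H$ either has endpoints of the same parity --- in which case the opposite-parity set avoids both endpoints, remains independent in $H$, and still has total degree $n$ --- or has endpoints of opposite parity, in which case both sets remain independent in $H$ and each contains exactly one endpoint of the chord, giving one vertex of degree $3$ and total degree $n+1$. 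Either way $\mic(H)\ge n=|H|$, and then $\mic(G)\ge\mic(H)+|G-H|\ge|H|+(|G|-|H|)=|G|$ closes the argument.

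The main obstacle I anticipate is nothing substantive but rather two small bookkeeping points: verifying that blocks of $G$ really are induced subgraphs (so that a chord of $C$ inside $B$ coincides with a chord of $C$ inside $G[V(C)]$), and pinning down that the phrase ``at most one chord'' in Rubin's Block Lemma is indeed a statement about the induced subgraph on $V(C)$. Once those conventions are dispatched the parity case analysis is immediate and the Lemma~\ref{mmic} step is mechanical.
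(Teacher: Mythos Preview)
Your proposal is correct and follows essentially the same route as the paper: reduce via Lemmas~\ref{mmic} and~\ref{mGt-lem}, invoke Rubin's Block Lemma on a bad block to get an even cycle with at most one chord, pick a color class of the cycle that avoids the chord (or hits at most one endpoint) to witness $\mic(H)\ge|H|$, and lift with Lemma~\ref{mmic}. The paper phrases the independent-set choice as ``$C$ has an equitable $2$-coloring, so some color class contains at most one of $x,y$,'' whereas you spell out the same-parity/opposite-parity cases explicitly; the content is identical, and your bookkeeping worries (blocks are induced, ``chord'' is meant in $G[V(C)]$) are routine.
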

\begin{proof}
	By Lemmas \ref{mmic} and \ref{mGt-lem}, it suffices to show that $\mic(G)\geq|G|$
	when $G$ is not a Gallai tree. In this case Rubin's Block Lemma implies $G$ has
	an induced even cycle $C$ with only one possible chord $xy$. As $C$ has an equitable
	$2$-coloring, it has an independent set $A$ with $|A|=|C|/2$ that contains at
	most one of $x$ and $y$. Then $A$ is independent in $H:=G[C]$, so $\mic(H)\geq|H|$.
	By Lemma~\ref{mmic}, $\mic(G)\geq\mic(H)+|G-H|\geq|G|$. 
\end{proof}

\subsection{$f$-AT graphs}

For a graph $G$, we define $\func{d_{0}}{V(G)}{\IN}$ by $d_{0}(v)\DefinedAs d_{G}(v)$.
The $d_{0}$-choosable graphs were first characterized by Borodin \cite{borodin1977criterion}
and independently by Erd\H{o}s, Rubin and Taylor \cite{erdos1979choosability}. The
connected graphs which are not $d_{0}$-choosable are precisely the Gallai trees
(connected graphs in which every block is complete or an odd cycle). Hladk{\`{y}},
Kr{á}l and Schauz \cite{Hladky} generalized this classification to online $d_{0}$-choosable
graphs. In fact, they proved a classification in terms of Alon-Tarsi orientations
as follows. A subgraph $H$ of a directed multigraph $D$ is called \emph{Eulerian}
if $d_{H}^{-}(v)=d_{H}^{+}(v)$ for every $v\in V(H)$. We call $H$ \emph{even}
if $\size{H}$ is even and \emph{odd} otherwise. Let $EE(D)$ be the number of even,
spanning, Eulerian subgraphs of $D$ and $EO(D)$ the number of odd, spanning, Eulerian
subgraphs of $D$. Note that the edgeless subgraph of $D$ is even and hence we always
have $EE(D)>0$.

Let $G$ be a graph and $\func{f}{V(G)}{\IN}$. We say that $G$ is \emph{$f$-Alon-Tarsi}
(for brevity, \emph{$f$-AT}) if $G$ has an orientation $D$ where $f(v)\ge d_{D}^{+}(v)+1$
for all $v\in V(D)$ and $EE(D)\ne EO(D)$. One simple way to achieve $EE(D)\ne EO(D)$
is to have $D$ be acyclic since then we have $EE(D)=1$ and $EO(D)=0$. In this
case, ordering the vertices so that all edges point the same direction and coloring
greedily shows that $G$ is $f$-choosable. If we require $f$ to be constant, we
get the familiar \emph{coloring number} $\col(G)$; that is, $\col(G)$ is the smallest
$k$ for which $G$ has an acyclic orientation $D$ with $k\ge d_{D}^{+}(v)+1$ for
all $v\in V(D)$. Alon and Tarsi \cite{Alon1992125} generalized from the acyclic
case to arbitrary $f$-AT orientations.

\begin{lem}\label{AlonTarsi} If a graph $G$ is $f$-AT for $\func{f}{V(G)}{\IN}$,
	then $G$ is $f$-choosable. \end{lem}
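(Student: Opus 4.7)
The plan is to prove this classical result of Alon and Tarsi via the polynomial method. Fix an ordering $V(G) = \set{v_1,\ldots,v_n}$ and define the \emph{graph polynomial}
\[
P_G(x_1,\ldots,x_n) \DefinedAs \prod_{\substack{v_iv_j \in E(G) \\ i < j}} (x_i - x_j).
\]
Any $\pi\colon V(G) \to \IN$ is a proper coloring of $G$ iff $P_G(\pi(v_1),\ldots,\pi(v_n)) \ne 0$. Alon's Combinatorial Nullstellensatz says: if the coefficient of a monomial $\prod_i x_i^{t_i}$ in $P_G$ is nonzero and $\sum_i t_i = \deg P_G = \size{G}$, then for any lists $L(v_i)$ of size $\ge t_i + 1$ there is $\pi$ with $\pi(v_i) \in L(v_i)$ at which $P_G$ does not vanish. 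Hence, given an $f$-AT orientation $D$, it suffices to take $t_i \DefinedAs d_D^+(v_i)$ (so $\sum_i t_i = \size{G}$ and $f(v_i) \ge t_i + 1$) and show the coefficient of $\prod_i x_i^{t_i}$ in $P_G$ is nonzero.

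To extract this coefficient, expand $P_G$ by picking, from each factor $(x_i - x_j)$ with $i<j$, either $x_i$ (sign $+1$) or $-x_j$ (sign $-1$). Each selection yields an orientation $D'$ of $G$ by orienting every edge away from the variable chosen, and produces the monomial $\prod_i x_i^{d_{D'}^+(v_i)}$ carrying sign $(-1)^{m(D')}$, where $m(D')$ counts edges $v_iv_j$ with $i<j$ oriented as $v_j \to v_i$ in $D'$. So the coefficient of $\prod_i x_i^{d_D^+(v_i)}$ equals
\[
\sum_{D'} (-1)^{m(D')},
\]
summed over all orientations $D'$ of $G$ with the same outdegree function as $D$.

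The key identification is that these $D'$ are precisely the orientations obtained from $D$ by reversing the edges of a spanning Eulerian subgraph $H$ of $D$: the outdegree function at each vertex $v$ is preserved iff the number of reversed edges leaving $v$ equals the number of reversed edges entering $v$, which is exactly the condition $d_H^-(v) = d_H^+(v)$. Since reversing any single edge flips the parity of $m$, we get $(-1)^{m(D')} = (-1)^{m(D)}(-1)^{\size{H}}$, so the coefficient equals $(-1)^{m(D)}(EE(D) - EO(D))$, which is nonzero by the $f$-AT hypothesis. The Combinatorial Nullstellensatz then delivers the desired $L$-coloring for any $f$-assignment $L$. The main obstacle is the sign-tracking that identifies the coefficient with $\pm(EE(D) - EO(D))$; everything else is a direct unpacking of definitions.
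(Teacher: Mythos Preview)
Your proof is correct and is essentially the original Alon--Tarsi argument via the Combinatorial Nullstellensatz: identify the coefficient of $\prod_i x_i^{d_D^+(v_i)}$ in the graph polynomial with $\pm(EE(D)-EO(D))$ by the bijection between orientations sharing the outdegree sequence of $D$ and spanning Eulerian subgraphs of $D$, then invoke the Nullstellensatz. The sign bookkeeping is right.

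Note, however, that the paper does not give its own proof of this lemma at all; it merely states it and cites Alon and Tarsi \cite{Alon1992125}. So there is no ``paper's approach'' to compare against---you have supplied the standard proof of a result the authors quote from the literature.
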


\noindent Schauz \cite{schauz2010flexible} extended this result to online $f$-choosability.

\begin{lem}\label{Schauz} If a graph $G$ is $f$-AT for $\func{f}{V(G)}{\IN}$,
	then $G$ is online $f$-choosable. \end{lem}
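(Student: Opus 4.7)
The plan is to prove this by induction on $\sum_{v \in V(G)} f(v)$, extending the polynomial-method proof of Lemma~\ref{AlonTarsi}. Fix an $f$-AT orientation $D$ of $G$. Given a nonempty $S \subseteq V(G)$ presented by the adversary, I want to exhibit an independent $I \subseteq S$ such that $G - I$ admits an $f'$-AT orientation $D'$, where $f'(v) \DefinedAs f(v) - 1$ for $v \in S \setminus I$ and $f'(v) \DefinedAs f(v)$ otherwise. The inductive hypothesis then yields that $G - I$ is online $f'$-choosable, and unwinding the definition of online choosability finishes the inductive step.

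The degree condition $d^+_{D'}(v) + 1 \le f'(v)$ already narrows the search: for each $v \in S \setminus I$ the bound tightens by one, so $v$ must lose at least one out-arc, and the only way to lose out-arcs after deleting $I$ is to have an out-neighbor in $I$. Thus $I$ must satisfy a kernel-like constraint inside $D[S]$. I will also allow arcs internal to $S \setminus I$ to be reversed when building $D'$, which provides additional flexibility and, crucially, sign cancellations in the Eulerian count.

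The combinatorial heart of the argument is a signed-count identity in the Alon--Tarsi style: partition the spanning Eulerian subgraphs of $D$ according to their trace on $S$, and show that summing $EE(D') - EO(D')$, with appropriate signs depending on the reorientations, over all admissible pairs $(I, D')$ with $I \subseteq S$ independent and $D'$ a suitable reorientation of $D - I$, recovers $\pm\bigl(EE(D) - EO(D)\bigr)$. Since the right-hand side is nonzero by $f$-AT, at least one admissible pair contributes nonzero, producing the desired $f'$-AT orientation of $G - I$.

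The main obstacle is formulating and verifying this signed identity precisely. Schauz's original argument handles it via a polynomial identity obtained by iteratively specializing the graph polynomial; an equivalent route is to prove an online-flexible version of the Combinatorial Nullstellensatz directly: if $P_D(x) = \prod_{u \to v \in D}(x_u - x_v)$ has a nonzero coefficient on $\prod_v x_v^{d^+_D(v)}$, then in a ``polynomial game'' (where the adversary marks a subset of variables each round and the player selects an independent subset of them to fix) the player can maintain nonzeroness of a designated coefficient throughout. Carrying out this polynomial-to-game translation, and checking that the Eulerian-subgraph interpretation survives each round, is the technical crux.
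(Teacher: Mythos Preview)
The paper does not prove this lemma at all: it is stated as a known result and attributed to Schauz \cite{schauz2010flexible} with no argument given. So there is no ``paper's own proof'' to compare against.

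On its own merits, your outline is pointed in the right direction---Schauz's argument is indeed a polynomial-method proof that maintains a nonzero coefficient of the graph polynomial through the rounds of the online game, and your kernel-like constraint on $I$ (every $v\in S\setminus I$ must have an out-neighbor in $I$) is the correct shape of what is needed. However, what you have written is a plan, not a proof: you explicitly identify the signed Eulerian identity and the polynomial-to-game translation as the ``technical crux'' without carrying either one out. In particular, the claimed identity that a suitably signed sum of $EE(D')-EO(D')$ over admissible pairs $(I,D')$ recovers $\pm(EE(D)-EO(D))$ requires a precise specification of which reorientations are allowed and what sign each contributes, and then a bijective or algebraic verification; none of that is present. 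If you intend this as a self-contained proof rather than a pointer to \cite{schauz2010flexible}, you would need to actually execute one of the two routes you describe.
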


Hladk{\`{y}}, Kr{á}l and Schauz \cite{Hladky} proved the following. \begin{thm}\label{ATDegreeChoosable}
	A connected graph is $d_{0}$-AT if and only if it is not a Gallai tree. \end{thm}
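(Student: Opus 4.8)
\textbf{Proof proposal for Theorem~\ref{ATDegreeChoosable}.}

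The plan is to prove both directions, with the forward direction being the essentially new contribution and the reverse direction being classical. First I would handle the easy direction: if $G$ is a connected Gallai tree, then $G$ is not $d_0$-choosable (this is the Borodin/Erd\H{o}s--Rubin--Taylor result, and it follows from Lemma~\ref{OnlineDegreeChoosable}, since $d_0$-AT implies online $d_0$-choosable by Lemma~\ref{Schauz}). So the content is: if $G$ is connected and \emph{not} a Gallai tree, then $G$ is $d_0$-AT. By Lemma~\ref{mmic} and Theorem~\ref{micBasics}, such a $G$ has $\mic(G) \ge |G|$, i.e.\ there is an independent set $A$ with $\size{A,V(G)} \ge |G|$. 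This is exactly the hypothesis of the Main Lemma with $f = d_0$, since $\sum_{v}(d_G(v) + 1 - f(v)) = |G|$. The Main Lemma then produces a nonempty induced subgraph $H$ that is online $f_H$-choosable with $f_H(v) = d_H(v)$. But this gives online $d_0$-choosability of $H$, not an Alon--Tarsi orientation, and not a statement about $G$ itself — so the Main Lemma as stated is not quite enough, and I expect this gap to be the main obstacle.

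To close it, I would go back into the proof machinery of Section~2 and track the Alon--Tarsi structure through the kernel-based argument. Concretely: Lemma~\ref{MicStrength}, via the Kernel Lemma, builds an orientation $D$ of $G$ with $f(v) \ge d_D^+(v) + 1$; I would want to upgrade the Kernel Lemma's conclusion from ``online $f$-choosable'' to ``$f$-AT'' by showing the orientation produced is in fact Alon--Tarsi. A cleaner route is to replace the use of the Kernel Lemma by a direct Alon--Tarsi argument: orient $G_A$ by Lemma~\ref{InOrientations} so that $d^-(v) \ge d_G(v) + 1 - f(v)$ on the bipartite part, and on $G - A$ replace each edge by an opposite pair as in Lemma~\ref{MicStrength}; then $d^+(v) \le f(v) - 1$ everywhere. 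For such a ``doubled on $G[B]$'' orientation $D$, every spanning Eulerian subgraph must, restricted to the $A$--$B$ edges, be empty (since $A$ is independent and Eulerian forces $d^- = d^+$, but the $A$-side has all arcs oriented one way in $G_A$, or one can choose the orientation of $G_A$ to be acyclic toward $A$), so the Eulerian subgraphs of $D$ are exactly those of the symmetric digraph on $G[B]$; pairing each such subgraph with itself-reversed, or using that doubled edges contribute a factor making $EE - EO$ computable, one shows $EE(D) \ne EO(D)$. I would then redo the minimization argument in the Main Lemma's proof (choose minimal $\card{H}$ with the edge-count inequality) verbatim, and conclude $H$ is $f_H$-AT.

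Finally I would pass from $H$ back to $G$. Here I would invoke the block structure: since $G$ is connected and not a Gallai tree, by Rubin's Block Lemma (as used in Theorem~\ref{micBasics}) $G$ contains an induced even cycle $C$ with at most one chord, and $H := G[C]$ already satisfies $\mic(H) \ge |H|$, so the argument above applied to $H$ — or more simply, a direct construction of an Alon--Tarsi orientation of an even cycle with at most one chord, which is a short explicit check — gives that this particular $H$ is $d_0^{H}$-AT. To lift $d_0^H$-AT on $H$ to $d_0$-AT on $G$, I would use that $G - H$ has an acyclic orientation achieving $d_0^{G-H}$-degeneracy-type bounds only on a subgraph; more carefully, one extends an AT-orientation across a cut by the standard ``AT is preserved under adding a vertex of out-degree $< f(v)$'' lemma, iterating over $V(G) \setminus V(H)$ in an order that keeps the relevant out-degrees controlled. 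The expected hard part, to reiterate, is the bookkeeping showing the orientation built by the kernel/orientation machinery genuinely has $EE \ne EO$ rather than merely being kernel-perfect; once that is established, the combinatorial envelope (Main Lemma minimization plus Rubin's Block Lemma plus the cut-extension lemma) carries the rest.
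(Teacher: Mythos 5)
The paper does not prove Theorem~\ref{ATDegreeChoosable}; it is cited from Hladk\'{y}, Kr\'{a}l and Schauz~\cite{Hladky}. What the paper proves via the Main Lemma is the kernel-perfect analogue, Corollary~\ref{classifyd0}, and your observation that the Main Lemma only delivers online choosability rather than an Alon--Tarsi orientation is exactly why the authors prove the $d_0$-KP classification and cite the $d_0$-AT one: the machinery behind Lemma~\ref{MicStrength} builds a kernel-perfect digraph, and kernel-perfectness and the Alon--Tarsi property are incomparable sufficient conditions for choosability, so nothing in that construction forces $EE\ne EO$.

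Your attempted upgrade of that machinery has two concrete gaps. First, the digraph $D$ built in the proof of Lemma~\ref{MicStrength} replaces every edge of $G-A$ by a pair of opposite arcs, so $D$ is a supergraph of $G$, not an orientation of $G$; the paper's definition of $f$-AT requires an orientation, and the paper explicitly remarks (in the $d_0$-KP subsection) that allowing a supergraph is a genuine strengthening needed for KP but not for AT. Second, your claim that every spanning Eulerian subgraph of $D$ is empty on the $A$--$B$ edges is unjustified: with $f=d_0$, Lemma~\ref{InOrientations} is applied with $g\equiv 1$, so the orientation of $G_A$ must send at least one in-arc to \emph{every} vertex, including every vertex of $A$; arcs therefore cross the cut in both directions, and you cannot reorient $G_A$ ``acyclically toward $A$'' without destroying the in-degree constraints the kernel argument depends on. Your fallback route is the sound one: use Rubin's Block Lemma to locate an induced even cycle $C$ with at most one chord, check directly that $H\DefinedAs G[C]$ has a $d_H$-AT orientation $D_H$, and then extend to $G$ by listing $V(G)\sm V(H)$ in BFS order from $H$ as $v_1,\dots,v_m$ and orienting every edge of $E(G)\sm E(H)$ from the earlier endpoint to the later one (all of $V(H)$ preceding $v_1$). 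Each $v_i$ then has an in-arc from its BFS parent, so $d_D^+(v_i)\le d_G(v_i)-1$; each $u\in V(H)$ gains only out-arcs, so $d_D^+(u)\le d_G(u)-1$ follows from $d_{D_H}^+(u)\le d_H(u)-1$; and no directed cycle of $D$ uses an edge outside $E(H)$ (once a cycle leaves $V(H)$ the indices only increase), so $EE(D)=EE(D_H)\ne EO(D_H)=EO(D)$. That argument, not the Main-Lemma path, is the one to lead with; the attempt to coax an Alon--Tarsi orientation out of the kernel construction is a dead end.
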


\subsection{$d_{0}$-KP graphs}

Acyclic orientations are also a special case of kernel-perfect orientations. A graph
$H$ is $f$-KP if $H$ has a kernel-perfect oriented supergraph $H'$ with vertex set $V(H)$, 
where $f(v)>d_{H'}^{+}(v)$ for all $v\in V(H')$. This supergraph for $f$-KP gives
us more power. For example $K_{4}-e$ has no kernel-perfect orientation $D$ with
$d_{G}(v)\geq d_{D}^{+}(v)+1$ for every vertex $v$, but if $H$ is the result of
doubling the edge that is in two triangles, then there is a kernel-perfect orientation
$D$ of $H$ with $d_{G}(v)\ge d_{D}^{+}(v)+1$. %
{} We could allow a supergraph for $f$-AT as well, but this doesn't give us any more
power. We will now prove the classification of connected $d_{0}$-KP graphs.

\begin{lem}\label{d0subgraphKP} A connected graph $G$ is $d_{0}$-KP if and only
	if some nonempty induced subgraph of $G$ is $d_{0}$-KP. \end{lem}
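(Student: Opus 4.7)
The forward direction is immediate, as $G$ is itself a nonempty induced subgraph of $G$. For the backward direction, let $H$ be a nonempty induced subgraph of $G$ that is $d_0$-KP, witnessed by a kernel-perfect oriented supergraph $H'$ on vertex set $V(H)$ with $d_{H'}^+(u) < d_H(u)$ for every $u \in V(H)$. I will induct on $k := |V(G)| - |V(H)|$. When $k = 0$, the induced-subgraph condition forces $H = G$, so there is nothing to prove.

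For $k \ge 1$, connectivity of $G$ supplies a vertex $v \in V(G) \setminus V(H)$ adjacent in $G$ to at least one vertex of $V(H)$. Set $H^\star := G[V(H) \cup \{v\}]$. The plan is to show that $H^\star$ is $d_0$-KP; once established, the induction hypothesis applied to $G$ with the induced subgraph $H^\star$ (whose gap is $k-1$) finishes the proof.

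To build the witness $(H^\star)'$, take $H'$, adjoin the vertex $v$, and orient every edge of $H^\star$ joining $v$ to $V(H)$ as an arc from its $V(H)$-endpoint into $v$. The out-degree conditions are then routine: $d_{(H^\star)'}^+(v) = 0 < 1 \le d_{H^\star}(v)$; and for each $u \in V(H)$, the out-degree of $u$ grows by exactly one when $uv \in E(H^\star)$, which is also the amount by which $d_{H^\star}(u)$ exceeds $d_H(u)$, so the strict inequality $d_{(H^\star)'}^+(u) < d_{H^\star}(u)$ is preserved.

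The main obstacle is verifying that $(H^\star)'$ is kernel-perfect, and the choice to orient all new arcs into $v$ is precisely tailored to it. Any induced subdigraph of $(H^\star)'$ avoiding $v$ is an induced subdigraph of $H'$ and hence has a kernel. For an induced subdigraph $D$ on a set $S$ with $v \in S$, let $S_0 := S \setminus (N_{H^\star}(v) \cup \{v\})$ and pick a kernel $K$ of $H'[S_0]$. Then $K \cup \{v\}$ is independent (no vertex of $S_0$ is adjacent to $v$ in $H^\star$) and dominates $S$: every $u \in N_{H^\star}(v) \cap S$ has an out-arc to $v$ by construction, while every remaining vertex of $S \setminus (K \cup \{v\})$ lies in $S_0$ and is dominated by $K$. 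This completes the kernel-perfect check and hence the induction.
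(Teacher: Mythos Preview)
Your proof is correct and follows essentially the same strategy as the paper's: extend the kernel-perfect witness by orienting all new edges toward the newly added vertices, then verify kernel-perfectness by removing the new vertex together with its neighborhood and invoking the kernel of the smaller piece. The only difference is granularity---you add one vertex at a time via induction, whereas the paper adds the entire neighborhood layer $S=N_G(V(H))\setminus V(H)$ in one step (doubling the edges inside $S$) and argues by maximality; your one-vertex version sidesteps the doubling but is otherwise the same argument.
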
 

\begin{proof} The `only if' direction is trivial. For the other direction, suppose
	$G$ has a nonempty induced subgraph that is $d_{0}$-KP and choose $H$ to be a
	maximal such subgraph. If $H=G$, we are done, so suppose not. Let $S=N_{G}(V(H))\sm V(H)$.
	Then $S\ne\emptyset$ since $G$ is connected. We show that $G[V(H)\cup S]$ is $d_{0}$-KP,
	violating maximality of $H$. Start with a kernel-perfect oriented supergraph $H'$
	of $H$ where $d_{H}(v)>d_{H'}^{+}(v)$ for all $v\in V(H')$. Now create an oriented
	supergraph $Q'$ of $Q\DefinedAs G[V(H)\cup S]$ by directing all edges from $V(H)$
	to $S$ into $S$ and replacing each edge in $Q[S]$ with arcs going both ways. Clearly,
	this oriented graph is kernel-perfect. Each vertex in $Q'[S]$ has at least one in-edge
	coming from $V(H)$, so we have $d_{Q}(v)>d_{Q'}^{+}(v)$ for all $v\in V(Q')$ as
	desired. \end{proof}

\begin{cor}\label{classifyd0} A connected graph is $d_{0}$-KP if and only if it
	is not a Gallai tree. \end{cor}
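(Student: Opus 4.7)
The plan has two directions. For the forward implication, I first observe that any $d_0$-KP graph is online $d_0$-choosable: the Kernel Lemma applied to the kernel-perfect oriented (multi-)supergraph $G'$ witnessing $d_0$-KP yields an online $d_0$-painting strategy on $G'$, and since independent sets in $G'$ remain independent in $G$, the same strategy works for $G$. Combined with Lemma~\ref{OnlineDegreeChoosable}, this gives that no Gallai tree can be $d_0$-KP.

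For the reverse implication, suppose $G$ is a connected graph that is not a Gallai tree. Then $G$ has a block $B$ that is neither a clique nor an odd cycle; $B$ is 2-connected and not $K_2$, so Rubin's Block Lemma produces an even cycle $C \subseteq B$ with at most one chord. Blocks are induced subgraphs of $G$, so $H := G[V(C)] = B[V(C)]$ is exactly $C$ together with the chord, if present. By Lemma~\ref{d0subgraphKP}, it now suffices to show that $H$ is $d_0$-KP.

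To do that, I would choose an independent set $A \subseteq V(H)$ of size $|C|/2$ avoiding at least one endpoint of the chord, exactly as in the proof of Theorem~\ref{micBasics}, and then apply the Kostochka--Yancey construction of Lemma~\ref{KernelPerfect}: replace each edge of $H[V(H)\setminus A]$ with a pair of opposite arcs, and orient the cycle edges (all of which lie between $A$ and its complement) consistently around $C$ in one direction. A short check yields out-degree exactly $1$ at each degree-$2$ vertex and out-degree at most $2$ at each chord endpoint, so $d^+(v) < d_H(v)$ throughout. Lemma~\ref{KernelPerfect} certifies kernel-perfectness, hence $H$ is $d_0$-KP, and Lemma~\ref{d0subgraphKP} propagates this to $G$. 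The main subtlety---and the one place where the extra slack of $d_0$-KP over $d_0$-AT actually gets used---is the case where both chord endpoints lie in $V(H)\setminus A$, forcing the chord to be doubled exactly as in the authors' $K_4 - e$ example.
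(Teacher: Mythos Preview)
Your argument is correct. The forward direction is essentially the paper's: $d_0$-KP implies (online) $d_0$-choosability via the Kernel Lemma, and Gallai trees are not $d_0$-choosable. For the reverse direction, however, the paper proceeds differently. It first invokes Theorem~\ref{micBasics} to obtain $\mic(G)\ge |G|$ whenever $G$ is connected and not a Gallai tree, and then applies the Main Lemma with $f=d_0$ (so that the right-hand side is exactly $|G|$); since the proof of the Main Lemma runs through Lemma~\ref{MicStrength} and hence through an explicit kernel-perfect orientation, it actually delivers a nonempty induced subgraph $H$ that is $d_0$-KP, and Lemma~\ref{d0subgraphKP} finishes. Your route short-circuits both Theorem~\ref{micBasics} and the Main Lemma: you go directly from Rubin's Block Lemma to the even-cycle-with-at-most-one-chord $H$, and then hand-build the kernel-perfect (multi-)orientation of $H$ via Lemma~\ref{KernelPerfect}, exactly recovering the doubled-edge phenomenon the authors illustrate with $K_4-e$. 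This is more elementary and self-contained; the paper's route is less direct but is the thematic payoff of the section, demonstrating that the Main Lemma together with the $\mic$ characterization of Gallai trees already encodes the $d_0$-KP classification.
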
 \begin{proof} If $G$ is a Gallai tree, then it
	is not $d_{0}$-choosable and hence not $d_{0}$-KP by the Kernel Lemma. For the
	other direction, let $G$ be a connected graph that is not a Gallai tree. By Theorem
	\ref{micBasics}, we have $\mic(G)\ge|G|$. Applying Main Lemma with $f(v)\DefinedAs d_{G}(v)$
	for all $v\in V(G)$ gives a nonempty induced subgraph $H$ of $G$ that is $f_{H}$-KP
	where $f_{H}(v)\DefinedAs f(v)+d_{H}(v)-d_{G}(v)=d_{H}(v)$ for $v\in V(H)$. Now
	Lemma \ref{d0subgraphKP} shows that $G$ is $d_{0}$-KP. \end{proof}

\bibliographystyle{amsplain}
\bibliography{GraphColoring1}
 
\end{document}